\newcommand{\co}{\mskip0.5mu\colon\thinspace}  
\newtheorem{theorem}{Theorem}[section]
\newtheorem{lemma}[theorem]{Lemma}
\newtheorem{proposition}[theorem]{Proposition}
\newtheorem{corollary}[theorem]{Corollary}
\theoremstyle{remark}
\newtheorem{remark}[theorem]{Remark}
\def\dfn{\emph}
\def\ds{\displaystyle}
\newcommand{\R}{\mathbb{R}}
\newcommand{\N}{\mathbb{N}}
\newcommand{\Z}{\mathbb{Z}}
\newcommand{\bdel}{\widetilde{\Delta}}
\def \bdry {\partial}
\def \x {\times}
\def \eu{{\text{e}}}
\newcommand{\tS}{{\mathbb S^2}}
\newcommand{\D}{{\mathbb D^2}}
\newcommand{\p}{\partial}
\begin{document}

\title[Topology of symplectic $4$-manifolds and Stein fillings]
{Topological complexity of \\ symplectic $4$-manifolds and Stein fillings}

\author[R. \.{I}. Baykur]{R. \.{I}nan\c{c} Baykur}
\address{Max Planck Institut f\"ur Mathematik, Bonn, 53111, Germany \newline
\indent Department of Mathematics, Brandeis University, Waltham, MA 02453, USA}
\email{baykur@mpim-bonn.mpg.de, baykur@brandeis.edu}

\author[J. Van Horn-Morris]{Jeremy Van Horn-Morris}
\address{Department of Mathematics, The University of Arkansas, \newline 
\indent Fayetteville, AR 72703,  USA }
\email{jvhm@uark.edu }

\begin{abstract}
We prove that there exists no a priori bound on the Euler characteristic of a closed symplectic $4$-manifold coming solely from the genus of a compatible Lefschetz pencil on it, nor is there a similar bound for Stein fillings of a contact $3$-manifold coming from the genus of a compatible open book --- except possibly for a few low genera cases. To obtain our results, we produce the first examples of factorizations of a boundary parallel Dehn twist as arbitrarily long products of positive Dehn twists along non-separating curves on a fixed surface with boundary. This solves an open problem posed by Auroux, Smith and Wajnryb, and a more general variant of it raised by Korkmaz, Ozbagci and Stipsicz, independently.
\end{abstract}

\maketitle

\setcounter{secnumdepth}{2}
\setcounter{section}{0}

% ================================================================================================================ 

\section{Introduction} 
% ================================================================================================================
% ================================================================================================================

Following the ground-breaking works of Donaldson \cite{Donaldson} and Giroux \cite{Gi2}, Lefschetz pencils and open books have become central tools in the study of symplectic \linebreak $4$-manifolds and contact $3$-manifolds. An open
question at the heart of this relationship is whether or not there exists an a priori bound on the topological complexity of a symplectic $4$-manifold, imposed by the genus of a compatible relatively minimal Lefschetz pencil on it. A similar question inquires if such a bound exists for all Stein fillings of a fixed contact $3$-manifold, imposed by the genus and the number of binding components of a compatible open book --- bounding a compatible allowable Lefschetz fibration on each one of these Stein fillings. 

In either case, it is easy to see that there is an upper bound on the first Betti number and a lower bound on the Euler characteristic in terms of the genus and the number of boundary components of a regular fiber. Furthermore, the Euler characteristic (equivalently, the second Betti number) is known to be bounded by above in some low genera cases: The only closed symplectic $4$-manifolds admitting genus one Lefschetz pencils are the blow-ups of the complex projective plane \cite{Kas, Moishezon} and Smith proved that only finitely many characteristic numbers are realized by genus two Lefschetz pencils \cite{SmithGenus2}. On the other hand, Kaloti observed that if a contact $3$-manifold can be supported by a planar open book, Euler characteristics of its Stein fillings constitute a finite set \cite{Kaloti} (also see \cite{Stipsicz2, Etnyre}). Nevertheless, the question on the existence of an upper bound on the Euler characteristic in terms of the topology of a regular fiber remained as a wide open question in both cases, and is the main focus of this article. 

Let $\Gamma_g^n$ denote the orientation-preserving mapping class group of a genus $g$ orientable surface with $n$ boundary components, and assume that $n \geq 1$. A gentle translation of the problems above gives rise to the following question: Is there an a priori upper bound on the length of any factorization of a boundary parallel Dehn twist as a product of positive Dehn twists along homotopically non-trivial curves in $\Gamma_g^n$? This question was raised by Auroux \cite[Question 3]{Auroux}, who attributed it to Smith, and was also discussed by Wajnryb \cite{Wajnryb}. A more general version of the question for \emph{any} element in $\Gamma_g^n$ was stated as an open problem by Ozbagci and Stipsicz \cite[Conjecture 15.3.5]{OS}, who conjectured that such an upper bound exists for any mapping class, by Korkmaz \cite[Problem 2.9]{Korkmaz}, and by Korkmaz and Stipsicz \cite[Problem 7.6]{KorkmazStipsicz}. 

Our main theorem answers all these questions in the negative: 

\begin{theorem} \label{mainthm}
The positive Dehn multitwist along the boundary can be factorized as a product of arbitrarily large number of positive Dehn twists along non-separating curves in $\Gamma_g^2$, provided $g \geq 11$. 
\end{theorem}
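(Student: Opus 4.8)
The plan is to exhibit, for every $N$, a positive factorization of the boundary multitwist $t_{\delta_1}t_{\delta_2}$ in $\Gamma_g^2$ (with $\delta_1,\delta_2$ the two boundary curves and $g\geq 11$) whose length is at least $N$ and all of whose factors are Dehn twists along non-separating curves. Such a factorization is exactly the monodromy word of an allowable Lefschetz fibration over $D^2$ with regular fiber a genus $g$ surface with two boundary components and with boundary monodromy the multitwist; so what I want is a single combinatorial engine that feeds arbitrarily many vanishing cycles into such a word. The engine will be a \emph{length-increasing positive substitution} that can be applied over and over, starting from a fixed seed.

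First I would fix a seed: a positive factorization $W_0$ of $t_{\delta_1}t_{\delta_2}$ in $\Gamma_g^2$ obtained from a known small-genus Lefschetz pencil by lifting its closed monodromy relation to the surface with two boundary components, using the standard chain, lantern and hyperelliptic relations to arrange that the global product is exactly the boundary multitwist and that every factor is a twist along a non-separating curve. Any such fixed-length seed is an acceptable starting point; the point of the argument is not the seed but what comes next.

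The heart of the matter is a relation $P=Q$ holding in $\Gamma_g^2$ — produced by overlaying two Lefschetz fibration structures on one and the same piece, e.g.\ a lantern/daisy-type monodromy substitution ``bred'' onto a fixed background genus $g$ fibration — such that $P$ and $Q$ are both positive products of Dehn twists along non-separating curves with the length of $Q$ strictly greater than that of $P$, and moreover: (i) $P$ occurs, after Hurwitz moves and global conjugation, as a sub-product of the factorization currently in hand; (ii) replacing the occurrence of $P$ by $Q$ does not change the total product $t_{\delta_1}t_{\delta_2}$; (iii) $Q$ again contains a copy of the configuration $P$, so the substitution can be performed once more; and (iv) no curve ever produced is separating or nullhomotopic in the fiber. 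Granting such a substitution, applying it to $W_0$ repeatedly produces a sequence $W_0, W_1, W_2,\dots$ of positive factorizations of $t_{\delta_1}t_{\delta_2}$ by non-separating twists whose lengths grow without bound; this proves the theorem for $g=11$, and the cases $g>11$ then follow by one further positive substitution raising the fiber genus by one while preserving positivity, non-separating-ness and the boundary multitwist (equivalently, by checking that the $g=11$ configuration embeds suitably into the larger fiber).

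I expect this third step to be the main obstacle, and indeed it is essentially the whole content of the problem of Smith and Auroux and of its generalization by Korkmaz, Ozbagci and Stipsicz: one must simultaneously arrange that the substitution (a) is a genuine relation in the \emph{bounded} mapping class group, (b) preserves the boundary multitwist as the total product, (c) never introduces a separating or trivial curve, and (d) regenerates its own input so that the process cannot terminate. It is precisely the bookkeeping required for (c) and (d) — keeping enough room for the background fibration and for all the newly created curves to remain non-separating — that forces the genus to be at least $11$; below that threshold there is simply not enough topology to run the construction.
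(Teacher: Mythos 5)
Your proposal is a strategy outline rather than a proof: all of the mathematical content is concentrated in the ``relation $P=Q$'' postulated in your third step, which is never constructed, and you acknowledge yourself that producing it ``is essentially the whole content of the problem.'' Nothing in the proposal supplies this engine, so there is nothing to verify. Worse, the specific engine you ask for --- a \emph{fixed} positive relation $P=Q$ with $|Q|>|P|$ whose right-hand side again contains a Hurwitz-copy of $P$, so that one local substitution can be iterated forever --- is not what the paper uses and is not known to exist; the familiar lantern/daisy substitutions go the wrong way (they shorten positive factorizations), and no self-reproducing lengthening substitution of your type is exhibited anywhere. So the gap is not a technicality: the proposed mechanism itself is unsubstantiated.

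For contrast, the paper's lengthening does not come from iterating a fixed substitution but from a one-parameter family of relations in $\Gamma_2^2$ (Lemma~\ref{lemma:commutator}, after \cite{BKM}): $1=T^m\,[\,t_{c_1}^{-m}t_{d_1}^{m},\psi^{-1}]$, where $T$ is a fixed product of $10$ positive twists along non-separating curves and the commutator's entries depend on $m$, so these are genuinely different relations for different $m$. The second ingredient is the family of swap maps $\rho_{ij}\in\Gamma_{11}^2$, lifts of quasipositive braids under a double branched cover, which each factor into $6$ positive non-separating Dehn twists and satisfy $A_i\rho_{ij}=\rho_{ij}A_j$ for any $A\in\Gamma_2^2$ supported in the swapped subsurfaces. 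Using the latter, the fixed element $\Phi=\rho_{24}\rho_{13}\rho_{34}\rho_{23}\rho_{12}$ absorbs the commutator $[A_1,B_1]$, yielding positive factorizations of $\Phi$ of length $10m+30$ for every $m$; $\Phi$ is then completed to the boundary multitwist via the braid relations among the $\rho_{ij}$, the identity $(\rho_{34}\rho_{23}\rho_{12})^4=M_\partial M_4^{-4}M_3^{-4}M_2^{-4}M_1^{-4}$, and the elementary ``inserting equals appending'' observation (Lemma~\ref{lm:insert}), with higher genus handled by chain relations. The missing engine in your write-up is exactly this pair --- unbounded-length positive expressions of a single commutator on $\Sigma_2^2$, together with positively factorizable subsurface-swapping maps that realize that commutator --- and it bears no resemblance to a self-similar $P\to Q$ substitution.
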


\noindent Recall that a Dehn multitwist along the boundary is nothing but a Dehn twist performed along \emph{each} boundary component simultaneously. In particular we obtain the same result for the boundary parallel Dehn twist in $\Gamma_g^1$ via the boundary capping homomorphism from $\Gamma_g^2$ to $\Gamma_g^1$. It therefore follows that there are many other mapping classes attaining the same property: one can for instance take higher powers of the boundary parallel positive Dehn twist $t_\delta$, or its product with any given product of positive Dehn twists along non-separating curves. Not all do though: for example, non-positive powers of the boundary parallel Dehn twist cannot be written as a product of positive Dehn twists to begin with. We should also note that by adding (possibly twisted) one handles to one of the boundary components, we can immediately extend our results to produce mapping classes with arbitrarily long positive Dehn twist factorizations on orientable surfaces with \textit{any} number of boundary components, as well as on non-orientable surfaces.

Going back to our original questions, we have the following result regarding the topological complexity of closed symplectic $4$-manifolds: 

\begin{theorem} \label{cor1}
For each $g \geq 11$, there is a family of relatively minimal genus $g$ Lefschetz pencils $\{ (X_m, f_m) | m \in \N \}$ such that the Euler characteristic of the closed symplectic $4$-manifold $X_m$ is strictly increasing in $m$. Moreover, for any fixed number $l \in N$, we can produce such a family of genus $g=11+ 4l$ Lefschetz pencils, each having the same first Betti number which is strictly increasing in $l$ 
\end{theorem}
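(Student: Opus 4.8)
The plan is to convert the positive factorizations furnished by Theorem~\ref{mainthm} into Lefschetz pencils via the standard dictionary between positive factorizations of a boundary multitwist and Lefschetz pencils, and then to read off $\chi$ and $b_1$ from the combinatorial data. Fix $g \geq 11$. By Theorem~\ref{mainthm}, for every $m \in \N$ we may choose a positive factorization
\[
t_{\delta_1}\, t_{\delta_2} \;=\; t_{c^m_1}\, t_{c^m_2} \cdots t_{c^m_{N_m}} \quad\text{in } \Gamma_g^2,
\]
in which every $c^m_i$ is non-separating and, after relabeling, $N_m$ is strictly increasing in $m$. This word is the monodromy of a genus-$g$ Lefschetz fibration over $D^2$ with regular fiber a genus-$g$ surface with two boundary components and $N_m$ critical points; since the boundary monodromy is the product of the positive Dehn twists along the two boundary curves, the two boundary components can be capped off (as in the construction of a pencil from a positive boundary-multitwist factorization) to yield a relatively minimal genus-$g$ Lefschetz pencil $(X_m,f_m)$ with two base points on a closed $4$-manifold $X_m$. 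Relative minimality is automatic since $g\geq 2$ and all vanishing cycles are non-separating, so no fiber contains a sphere component; by Gompf's theorem $X_m$ admits a symplectic form compatible with $f_m$.

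Next I would compute the Euler characteristic. Blowing up the two base points turns $(X_m,f_m)$ into an honest Lefschetz fibration $\widetilde X_m \to S^2$ with closed fiber $\Sigma_g$ and the same $N_m$ nodal fibers, whence $\chi(\widetilde X_m) = 2(2-2g) + N_m$ and therefore
\[
\chi(X_m) \;=\; \chi(\widetilde X_m) - 2 \;=\; 2 - 4g + N_m .
\]
Since $g$ is fixed and $N_m$ is strictly increasing in $m$, the Euler characteristics $\chi(X_m)$ form a strictly increasing sequence, which gives the first assertion.

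For the second assertion I would additionally track the homology classes of the vanishing cycles. The blow-up exhibits the two exceptional spheres as sections of $\widetilde X_m \to S^2$, so $\pi_1(\widetilde X_m)$, and hence $H_1(X_m) = H_1(\widetilde X_m)$, is the quotient of the corresponding group of the fiber by the subgroup normally generated by the vanishing cycles; consequently $b_1(X_m) = 2g - r_m$, where $r_m$ is the rank of the span of the classes $[c^m_1],\dots,[c^m_{N_m}]$ in $H_1(\Sigma_g)$. The genus-$(11+4l)$ factorizations are built from a base factorization by $l$ successive genus-raising steps, each adding $4$ to the genus while introducing homology into the fiber that is disjoint from, hence not killed by, any of the newly created vanishing cycles. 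One then checks that (i) for a fixed $l$, lengthening the factorization (increasing $m$) does not enlarge the span of the vanishing cycles, so $r_m$, and hence $b_1(X_m)$, is independent of $m$ within the family; and (ii) each of the $l$ genus-raising steps drops the corank $2g - r$ by a fixed positive amount, so the common value $b_1^{(l)}$ is strictly increasing in $l$. Combined with the $\chi$-computation above, this yields a family of genus-$(11+4l)$ pencils with common first Betti number $b_1^{(l)}$, strictly increasing in $l$, and with $\chi$ strictly increasing in $m$.

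The main obstacle I expect is precisely the homological bookkeeping of (i) and (ii): one must verify that the curves used to lengthen the factorization contribute no new homology to the span, and that the genus-raising operation increases $b_1$ by a definite amount rather than having that increase neutralized by the extra vanishing cycles it brings in. This is a concrete computation with the explicit curves appearing in the proof of Theorem~\ref{mainthm}, and everything else --- the passage from factorization to symplectic pencil, relative minimality, and the Euler-characteristic count --- is routine.
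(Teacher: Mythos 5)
Your overall strategy is the same as the paper's: convert the positive factorizations of the boundary (multi)twist from Theorem~\ref{mainthm} into Lefschetz fibrations with distinguished $-1$ sections, blow down to get pencils, read off $\chi$ from the number of critical points, and compute $b_1$ as the corank of the span of the vanishing cycles in $H_1$ of the closed fiber. The first assertion is complete as you have it (the paper works in $\Gamma_g^1$ with one section and you work in $\Gamma_g^2$ with two; this only shifts $\chi$ by a constant), and relative minimality does follow from the vanishing cycles being non-separating.

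For the second assertion, however, you have reduced the claim to your items (i) and (ii) and then stopped, explicitly flagging them as the remaining obstacle; but those items \emph{are} the content of the second assertion, so as written this is a gap. Moreover, your heuristic for (ii) is not right as stated: the new homology created by a genus-raising step is \emph{not} disjoint from the newly created vanishing cycles. Each enlarged swap map $\rho'_{ij}$ contributes vanishing cycles meeting the new handles of $F'_i$ and $F'_j$, and these impose nontrivial relations on the new generators. The paper's actual computation is the following. The vanishing cycles of $\rho_{ij}$ (and of its conjugates by the maps $A_i, B_i$ supported in the $F_k$) impose exactly the relations that identify the homology generators carried by $F_i$ with those carried by $F_j$ and turn the two generators of the ``connecting genus'' of $F_{ij}$ into $2$-torsion. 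Since every pair $(i,j)$ occurs, all infinite-order generators of $H_1$ of the closed fiber are accounted for by the $2(2+l)$ generators supported in $F_1'$; the remaining vanishing cycles are (conjugates of) $c_1,c_2,c_3$ inside $F_1'$, which kill three of these and nothing more, giving $b_1=2(2+l)-3=1+2l$. This also settles your item (i): increasing $m$ only repeats twists along $c_1,c_2,c_3$ (via $T_1^m$), so the span is unchanged. Finally, note a wording slip in (ii): the genus-raising step \emph{raises} the corank $2g-r$ (by $2$ per step), it does not drop it.
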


\noindent The assumptions on the relative minimality and the existence of base points (equivalently, the existence of a section of square $-1$ for the Lefschetz fibration obtained after blowing-up the base points) rule out two well-known ways of inflating the Euler characteristic: one can blow up along the fibers without destroying the fibration structure or can take non-trivial fiber sums. The relative minimality, by definition, rules out the former, whereas the latter is ruled out by a theorem of Stipsicz \cite{Stipsicz}, also see \cite{Smith}. If there were such an a priori bound on the genus $g$ relatively minimal Lefschetz fibrations with $-1$ sections, it would imply a bound on Euler characteristics of minimal symplectic $4$-manifolds admitting genus $g$ Lefschetz pencils. (In fact, an adjunction argument shows that this holds true even for non-minimal symplectic $4$-manifolds, provided the $4$-manifold is neither rational nor ruled --- as it is the case for our examples above.) 

\newpage
Another result we obtain concerns the topological complexity of Stein fillings: 
 
\begin{theorem} \label{cor2}
For each $g \geq 11$, there are infinitely many closed $3$-manifolds admitting genus $g$ open books with connected binding which bound allowable Lefschetz fibrations over the $2$-disk with arbitrarily large Euler characteristics. Moreover, for each $l \in \N$, we can produce such a family of closed contact $3$-manifolds, where each member is supported by a genus $g=11+4l$ open book bounding allowable Lefschetz fibrations, each having the same first Betti number which is strictly increasing in $l$.   
\end{theorem}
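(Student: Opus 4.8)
The plan is to read the required open books and their fillings directly off the long positive factorizations supplied by Theorem~\ref{mainthm}. Fix $g \ge 11$ and begin with a factorization $t_{\delta_1} t_{\delta_2} = t_{\gamma_1}\cdots t_{\gamma_N}$ of the boundary multitwist in $\Gamma_g^2$ into positive Dehn twists along non-separating curves, where $N$ may be taken as large as we like. I would first apply the boundary-capping homomorphism $\Gamma_g^2 \to \Gamma_g^1$ that fills in the boundary component carrying $\delta_2$: it kills $t_{\delta_2}$, turns $t_{\delta_1}$ into the boundary-parallel twist $t_\delta$ in $\Gamma_g^1$, and sends each $\gamma_i$ to a curve that is still non-separating --- a non-separating curve cannot become null-homotopic or separating after capping off a disk, since it would then have been isotopic to $\delta_2$ and hence already separating. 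This produces a factorization $t_\delta = t_{\overline\gamma_1}\cdots t_{\overline\gamma_N}$ in $\Gamma_g^1$ with $N$ arbitrarily large and every $\overline\gamma_i$ non-separating; for each $k \ge 1$ its $k$-th power is then a factorization of $t_\delta^{\,k}$ as a product of $kN$ positive Dehn twists along non-separating curves.

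Each such word of length $kN$ is the monodromy factorization of an allowable Lefschetz fibration $X_{k,N} \to D^2$ with regular fiber $\Sigma_g^1$ and $kN$ critical points. By the results of Loi--Piergallini and Akbulut--Ozbagci the total space $X_{k,N}$ carries a Stein structure filling the contact $3$-manifold $Y_k$ that is supported by the open book $(\Sigma_g^1, t_\delta^{\,k})$, whose binding is connected, and $\chi(X_{k,N}) = \chi(\Sigma_g^1)\,\chi(D^2)+kN = (1-2g)+kN$. Thus, for each fixed $k$, letting $N$ range over the infinite set of factorization lengths from Theorem~\ref{mainthm} realizes $Y_k$ as the boundary of allowable Lefschetz fibrations over $D^2$ of arbitrarily large Euler characteristic, all of them filling one and the same contact structure, which depends only on the open book.

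What remains --- and this is the only step that is not routine bookkeeping --- is to show that $\{Y_k\}_{k\ge 2}$ contains infinitely many pairwise non-diffeomorphic $3$-manifolds. Since $\delta$ is parallel to the binding, passing from $(\Sigma_g^1,\mathrm{id}) = \#^{2g}(S^1\times S^2)$ to $(\Sigma_g^1,t_\delta^{\,k})$ amounts to regluing a tubular neighborhood of the binding with $k$ extra twists; concretely $Y_k$ is the Dehn filling of the binding complement $\Sigma_g^1\times S^1$ along the slope $\mu + k\lambda$, where $\lambda = [\partial\Sigma_g^1\times\mathrm{pt}]$ is the page-boundary direction and the meridian $\mu$ of the binding is isotopic to the fiber of $\Sigma_g^1\times S^1 \to \Sigma_g^1$. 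As this slope meets that fiber $k$ times, $Y_k$ is the Seifert fibered space over the closed genus-$g$ orbifold with a single cone point of order $k$; equivalently $\pi_1(Y_k) = \langle x_1,\dots,x_{2g} \mid [w^{\,k},x_j]=1,\ 1\le j\le 2g\rangle$ with $w = \prod_{i=1}^g[x_{2i-1},x_{2i}]$, which also shows that $H_1(Y_k)\cong\ZZ^{2g}$ for every $k$, so $H_1$ alone does not distinguish them. For $g\ge 11$ these base orbifolds are hyperbolic and pairwise distinct, so --- by uniqueness of the Seifert fibration on such large aspherical $3$-manifolds, or by comparing fundamental groups modulo their centers --- the $Y_k$ are pairwise non-diffeomorphic. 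For the refined statement one repeats the whole construction starting from the genus $g = 11+4l$ factorizations built in the proof of Theorem~\ref{mainthm}: as in Theorem~\ref{cor1}, these may be arranged so that the span of the vanishing cycles in $H_1(\Sigma_g^1)$ has corank growing linearly in $l$, so each resulting Stein filling has a first Betti number that is constant along the family and strictly increasing in $l$ (and, in any event, $b_1(Y_k) = 2g = 22+8l$). The genuinely non-formal point is thus the distinctness of the $Y_k$, which is exactly what the Seifert-fibered picture above provides.
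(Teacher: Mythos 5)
Your proposal is correct and follows essentially the same route as the paper: iterate the boundary(-multi)twist factorization from Theorem~\ref{mainthm} to get open books $(\Sigma_g^1,t_\delta^k)$ filled by allowable Lefschetz fibrations with arbitrarily many non-separating vanishing cycles, distinguish the $Y_k$ via their Seifert fibered structures over a genus $g$ base (the paper works with two binding components and two exceptional fibers before capping off, you cap off first and get one exceptional fiber --- an immaterial difference), and defer the first Betti number count for the fillings to the computation in the proof of Theorem~\ref{cor1}. The only soft spot is that you assert rather than verify that the corank of the span of vanishing cycles is $1+2l$, but since you explicitly invoke the Theorem~\ref{cor1} argument, which is exactly what the paper does, this is not a gap.
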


Since allowable Lefschetz fibrations can be equipped with Stein structures inducing the same contact structure as the one induced by the boundary open book \cite{LP, AO}, from Theorem~\ref{cor2} we get infinitely many closed contact $3$-manifolds each of which can be filled by Stein manifolds with arbitrarily large Euler characteristics. Therefore we obtain a new proof of the \textit{``half''} of our main theorem in \cite{BV}, without addressing the unboundedness of the signature this time: namely, we see that there is a new and rather large class of contact $3$-manifolds which admit Stein fillings with arbitrarily large Euler characteristics. Interested readers should compare Corollary~\ref{ALSFthm} below with Theorem~1.1 in \cite{BV}. Nevertheless, this new proof cannot be claimed to be all independent of our previous one, since our close analysis of the monodromies of the allowable Lefschetz fibrations \textit{over positive genera surfaces} in \cite{BV} played a vital role in our discovery of the positive factorizations we have in the current article.

Lastly, we shall add that we are able to improve all three theorems above so as to cover any $g \geq 8$ instead of $g \geq 11$ (but not further; see the discussion in Section~\ref{Sec: Final}). This improvement however comes at an expense of complicating our arguments at a mostly technical level, which we preferred to avoid. We present our proofs for the relatively simpler case of $g \geq 11$ and provide only a brief sketch of how the arguments should be modified to strike the better bound $g \geq 8$ (See Remark~\ref{Genus8}). 

The outline of our paper is as follows: 

In Section~\ref{Sec: Preliminaries} we review the background material relating mapping class group factorizations to Lefschetz pencils and open books, and in turn, to symplectic $4$-manifolds and contact $3$-manifolds. In Section~\ref{Sec: Construction} we prove Theorem~\ref{mainthm}. Our construction of these monodromies is more geometric than algebraic: the desired monodromies are built using mapping classes of a surface which swap certain subsurfaces. These \textit{swap maps} are described as lifts of natural braid maps which admit quasipositive factorizations, so they admit factorizations into positive Dehn twists themselves. Furthermore, our geometric construction allows us to introduce a simple calculus involving various swap maps. We then custom tailor our monodromies out of these blocks on a genus $g \geq 11$ surface with two boundary components to realize commutators of maps on a genus two subsurface, which we can in turn express as arbitrarily long products of positive Dehn twists. Section~\ref{Sec: Applications} is where we derive our results on the topology of closed symplectic $4$-manifolds and contact $3$-manifolds. Section~\ref{Sec: Final} gathers some final comments and questions.

\vspace{0.2in}
\noindent \textit{Acknowledgements.} The first author was partially supported by the NSF grant DMS-0906912. 

\newpage
%\vspace{0.2in}
% ================================================================================================================
\section{Preliminaries} \label{Sec: Preliminaries}
% ================================================================================================================
% ================================================================================================================
 
Here we review the connections between symplectic $4$-manifolds and Stein fillings of contact $3$-manifolds with positive factorizations in the mapping class groups of surfaces --- via Lefschetz fibrations/pencils and open books. While we provide a detailed review of descriptions of all via mapping class group factorizations, we will assume familiarity with the basic notions of symplectic, Stein, or contact structures, for which the reader can for instance turn to \cite{GS}.  

All manifolds in this article are assumed to be compact, smooth and oriented, and all maps are assumed to be smooth. 

% ================================================================================================================
\subsection{Mapping class groups and braid groups} \
% ================================================================================================================

Let $\Sigma_{g,r}^s$ denote a compact oriented surface of genus $g$ with $s$ boundary components and $r$ marked points in its interior. The \emph{mapping class group} of $\Sigma_{g,r}^s$, denoted by $\Gamma_{g,r}^s$, is the group of isotopy classes of orientation-preserving self-diffeomorphisms of $\Sigma_{g,r}^s$, fixing the points on the boundary and fixing each marked point individually. The isotopies of $\Sigma_{g,r}^s$ are assumed to fix the points on the boundary as well as the marked points. For simplicity, we will often drop $r$ or $s$ from our notation when they are equal to zero.

We denote a positive (right-handed) Dehn twist along an embedded simple closed curve $a$ on $\Sigma_g^s$ by $t_a$, and a negative (left-handed) Dehn twist by $t_a^{-1}$. It is a fundamental fact that $\Gamma_g^s$ is generated by positive and negative Dehn twists. Our focus will be on elements of $\Gamma_{g}^s$ which can be expressed as a product of positive Dehn twists. Any such expression of an element $\Phi \in \Gamma_g^s$ will be called a \emph{positive factorization} of $\Phi$.

Most of the maps and their geometric descriptions we use in this article will be inferred by lifting diffeomorphisms of the unit $2$-disk $\D$ to a compact orientable surface $F$, under a double branched covering. So they arise via a homomorphism from $B_n$, the $n$-stranded braid group on $\D$, to $\Gamma_g^s$ under a fixed identification of $F$ with $\Sigma_g^s$. As usual, we study the $n$-stranded braid group $B_n$ by identifying $\D$ with the unit disk in $\R^2$ and looking at the orientation-preserving self-diffeomorphisms of the latter which preserve $n$ marked points on the $x$-axis (or $y$-axis) \textit{setwise}, up to isotopies. $B_n$ is generated by \textit{half-twists} along embedded arcs in the interior of $\D$ joining any pair of marked points, while avoiding the other marked points. (Further, there are \textit{standard generators}, half-twists along arcs on the $x$-axis joining two consecutive points.) A half-twist is called \textit{positive} if we exchange the marked points in a right-handed manner, or equivalently when the square of a half-twist is a positive Dehn twist along the boundary of a small disk containing the arc in the complement of the other marked points. As above, we will focus on elements of $B_n$, called the \emph{quasipositive braids}, which can be expressed as a product of positive half-twists. We call such a factorization of an element $b \in B_n$, a \emph{quasipositive factorization}.

We will also make use of the \textit{framed braid group} $B_{*n}$, isomorphic to $B_n \x \Z^n$. One can think of this as an extension of $B_n$ where we keep track of the twisting around each marked point. To do this, one picks a direction vector at every marked point and as before, the elements of $B_{*n}$ are thought as orientation-preserving self-diffeomorphisms of $\D$, preserving $n$-marked points on the $x$-axis and their associated vectors, all considered up to isotopy preserving said directions. We will prefer to work with an alternate description of $B_{*n}$ by considering the orientation-preserving self-diffeomorphisms of $\D$ with $n$ disjoint disks along the $x$-axis removed. Here we additionally choose a marked point on each interior boundary component and then require the self-diffeomorphisms to fix the outer boundary pointwise and preserve the interior boundaries along with their marked points. (Again this is considered up to isotopies which fix the outer boundary, preserve the interior boundaries, and fix the marked points.) Similar to $B_n$, $B_{*n}$ can be generated by half-twists along arcs connecting two boundary components and Dehn twists about curves parallel to the interior boundary components. 

Lastly, for each of the groups $\Gamma_{g,r}^s$, $B_n$ and $B_{*n}$, we use the functional notation induced by a composition of maps, acting from right to left. Compatibly, given a braid picture, we will read its action starting from the right. We will frequently work with representatives of group elements while presenting mapping group relations, which should be understood to hold up to isotopies of these elements in the actual group.

% ================================================================================================================
\subsection{Lefschetz fibrations and symplectic $4$-manifolds} \
% ================================================================================================================

A \dfn{Lefschetz fibration} is a surjective map $f\colon\, X\to \Sigma$, where $X$ and $\Sigma$ are $4$- and $2$-dimensional compact manifolds, respectively, such that $f$ fails to be a submersion along a non-empty discrete set $C$, and around each \textit{critical point} in $C$ it conforms to the local model $f(z_1,z_2)=z_1 z_2$, compatible with orientations. When $\partial X \neq \emptyset$, we assume that $C$ lies in the interior of $X$. We will moreover assume that each \emph{singular fiber} contains only one critical point, which can always be achieved after a small perturbation of any given Lefschetz fibration. Lastly, all the Lefschetz fibrations we consider will be over $\Sigma=\tS$, unless explicitly stated otherwise. A \dfn{Lefschetz pencil} is then defined as a Lefschetz fibration on the complement of a discrete set $B$ in $X$ (for $\partial X = \emptyset$), where $f$, around each \textit{base point} in $B$ conforms to the local model $f(z_1,z_2)=z_1/z_2$. We will often denote a Lefschetz fibration or a Lefschetz pencil as a pair $(X,f)$. 

It turns out that the monodromy of a Lefschetz fibration $f\colon X\to \D$ over the disk with a single
critical point is a positive Dehn twist along an embedded simple closed curve on a reference regular fiber, called the \emph{vanishing cycle} of this critical point. The critical point arises from attaching a $2$--handle, called the \dfn{Lefschetz handle} to the regular fiber with framing $-1$ with respect to the framing induced by the fiber. The attaching circle of this handle is the vanishing cycle, which contracts to the corresponding singular point. 

It follows that the monodromy of a Lefschetz fibration $f\colon X \to \tS$ with $n$ critical points is given by a factorization of the identity element $1\in \Gamma _g$ as  
\begin{equation} \label{monodromyfactorization}
1=\prod _{i=1}^n t_{v_i} \, ,
\end{equation}
where $v_i$ are the vanishing cycles of the singular fibers and $t_{v_i}$ is the positive Dehn twist about $v_i$ for $i=1, \ldots, n$. This factorization of the identity is called the \emph{monodromy representation} or the \emph{monodromy factorization} of $f$. Conversely, any word
\[ 
w=\prod _{i=1}^n t_{v_i} \]
prescribes a Lefschetz fibration over $\D$, and if $w=1$ in $\Gamma_g$ we get a Lefschetz fibration $X\to \tS$.

For a Lefschetz fibration $f: X\to \tS$, a \emph{section} is a map $\sigma  \colon \tS  \to X$ so that $f \circ \sigma = id_{\tS}$. Suppose that a fibration $f\colon X\to \tS$ admits a section $\sigma$ such that $S = \sigma(\Sigma) \subset X \setminus C$. Then the section $S$ provides a lift of the monodromy representation $\pi_1 (\Sigma \setminus f(C)) \to \Gamma_g$ to the mapping class group $\Gamma _{g,1}$. One can then fix a disk neighborhood of this section preserved under the monodromy, and get a lift of the factorization to $\Gamma_g^1$, which equals to a power of the boundary parallel Dehn twist. That is we get a defining word
  \[
  t_\delta ^m=\prod _i t_{v'_i} 
   \]
in $\Gamma_g^1$, where $v'_i$ are lifts of $v_i$ to $\Gamma_g^1$, $\delta$ is a boundary parallel curve, and $m$ is the negative of the self-intersection number of the section $S$. Conversely, any word in $\Gamma_g^1$ as above prescribes a genus $g$ Lefschetz fibration with vanishing cycles $v_i$ and with a distinguished section $S$ of self-intersection $-m$. These observations generalize in a straightforward fashion when we have $r$ \emph{disjoint} sections $S_1, \ldots, S_r \subset X \setminus C$, corresponding to $r$ marked points captured in the mapping class group $\Gamma_{g,r}$. 

A well-known fact is that one can blow-up all the base points of a genus $g$ Lefschetz pencil $(X,f)$ and obtain a genus $g$ Lefschetz fibration $(\tilde{X}, \tilde{f})$ with $r$ disjoint \emph{$-1$ sections}, that is sections of self-intersection number $-1$, each corresponding to a base point in $B$. In this case one obtains a relation
\[
  t_{\delta_1} \cdot \ldots \cdot t_{\delta_r} =\prod _i t_{v'_i} \, 
   \]
in $\Gamma^r_g$ (once again lifted from a factorization of the identity in $\Gamma_{g,r}$.). Conversely, whenever we have such a relation, we can construct a genus $g$ Lefschetz fibration with $r$ disjoint $-1$ sections, which can be then blown-down to obtain a genus $g$ Lefschetz pencil on a closed $4$-manifold.

A Lefschetz fibration (resp. pencil) $(X, f)$ and a symplectic form $\omega$ on $X$ are said to be \emph{compatible} if all the fibers are symplectic with respect to $\omega$ away from the critical points (resp. critical points and base points). The following theorem, coupled with the above discussion, gives a combinatorial way to study symplectic $4$-manifolds:

\begin{theorem}[Donaldson \cite{Donaldson}, Gompf \cite{GS}] 
Any symplectic $4$-manifold $(X, \omega)$ admits a compatible Lefschetz pencil, with respect to $\omega$. Conversely, any Lefschetz fibration $(X,f)$ with a homologically essential fiber admits a compatible symplectic form $\omega$ with respect to which any prescribed collection of disjoint sections are symplectic. 
\end{theorem}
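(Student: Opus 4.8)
The plan is to establish the two implications separately, since they rest on completely different machinery: the forward direction is Donaldson's asymptotically holomorphic construction, and the converse is the Thurston--Gompf patching argument.

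For Donaldson's direction I would argue as follows. After rescaling, perturb $\omega$ so that $\frac{1}{2\pi}[\omega] \in H^2(X;\Z)$; this is harmless because the symplectic condition (and the condition that a given finite collection of submanifolds be symplectic) is open, while rational classes are dense. Fix a compatible almost complex structure $J$ and a Hermitian line bundle $L \to X$ with $c_1(L) = \frac{1}{2\pi}[\omega]$ carrying a connection of curvature $-i\omega$, and study pairs $(s_0, s_1)$ of sections of $L^{\otimes k}$ for $k \gg 0$ that are \emph{asymptotically holomorphic} --- $|\bar\partial s|$ small relative to $|\partial s|$ in the rescaled metric $k\, g$ --- and \emph{uniformly transverse}: one wants $[s_0 : s_1] \colon X \setminus B_k \to \CP^1$, where $B_k = s_0^{-1}(0) \cap s_1^{-1}(0)$, together with its linearization to be uniformly bounded away from degeneracy. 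Donaldson's globalization of local perturbations (a quantitative Sard-type theorem) produces such a pair; blowing up $B_k$ then yields a Lefschetz fibration whose fibers, being close to $J$-holomorphic, are symplectic, with the exceptional divisors over $B_k$ as $-1$ sections. The hard part --- indeed the technical core of this half --- is exactly the uniform transversality: one must superpose finitely many small, localized perturbations of $(s_0, s_1)$ in a controlled way so that on \emph{all} of $X$ the zero set $B_k$ is a symplectic surface and the only singularities of $[s_0 : s_1]$ are of the expected types $z_1/z_2$ at the base points and $z_1 z_2$ at the critical points.

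For Gompf's converse I would use the Thurston--Gompf construction. Since the generic fiber $F$ is homologically essential there is a closed $2$-form $\eta$ on $X$ with $\langle [\eta], [F]\rangle > 0$; by a Thurston-type interpolation with a partition of unity on the base one arranges $\eta$ to restrict to a positive area form on each smooth fiber, and near each critical point one replaces $\eta$ by (a form agreeing to high order with) the K\"ahler form of the local model $f(z_1, z_2) = z_1 z_2$. Pulling back an area form $\omega_\Sigma$ on the base $\Sigma = \tS$, the closed form
\[
\omega = f^* \omega_\Sigma + \epsilon\, \eta
\]
is symplectic for $\epsilon > 0$ small: away from the critical set it is nondegenerate because it agrees with $\epsilon\, \eta$ on the vertical distribution while $f^*\omega_\Sigma$ handles a horizontal complement, and near the critical set it is already K\"ahler. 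To make a prescribed disjoint family $S_1, \dots, S_r$ of sections symplectic, first isotope them off neighborhoods of the critical points, then modify $\eta$ by exact forms supported near the $S_i$ so that $\eta$ is also positive along each $S_i$ (each $S_i$ being automatically transverse to the fibers), and note that for small $\epsilon$ the restriction $\omega|_{S_i}$ is then an area form. The main obstacle here is the bookkeeping of scales: the single parameter $\epsilon$ must be chosen \emph{after} fixing $\eta$, the K\"ahler models, the neighborhoods of the critical set, and the neighborhoods of the $S_i$, and one must check --- by a compactness argument on the compact manifold $X$ --- that it can be taken small enough to simultaneously win nondegeneracy on the vertical distribution, positivity on the sections, and compatibility across all the patching regions.
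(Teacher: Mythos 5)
This statement is quoted in the paper as background (with citations to Donaldson and to Gompf--Stipsicz) and is given no proof there, so there is nothing internal to compare against; your outline is the standard one from those sources --- Donaldson's asymptotically holomorphic pairs of sections of $L^{\otimes k}$ with quantitative transversality for the pencil direction, and the Thurston--Gompf patching of a fiberwise area form with the K\"ahler local model and the term $f^*\omega_\Sigma$ for the converse. The sketch is correct as far as it goes (for the sections you could even skip modifying $\eta$, since $f^*\omega_\Sigma$ already restricts positively to any section), with the understanding that the analytic core of Donaldson's half --- the globalization of local transversality perturbations and the reduction of the critical points to the positive quadratic model --- is only named, not carried out.
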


% ================================================================================================================
\subsection{Open books and contact $3$-manifolds} \
% ================================================================================================================

An \dfn{open book decomposition} $\mathcal{B}$ of a $3$--manifold $Y$ is a pair $(L,f)$ where $L$ is an oriented link in $Y$, called the \dfn{binding}, and $f\co Y \setminus L \to S^1$ is a fibration such that  $f^{-1}(t)$ is the interior of a compact oriented surface $F_t \subset Y$ and $\partial F_t=L$ for all $t \in S^1$. At times we will denote this open book by the pair $(Y, f)$. The surface $F=F_t$, for any $t$, is called the \emph{page} of the open book. The \emph{monodromy} of an open book is given by the return map of a flow transverse to the pages and meridional near the binding, which is an element $\mu \in \Gamma_{g}^s$, where $g$ is the genus of the page $F$, and $s$ is the number of components of $L =\partial F$. 

Suppose we have a Lefschetz fibration $f\co X \to \D$ with bounded regular fiber $F$, and let $p$ be a regular value in the interior of the base $\D$. Composing $f$ with the radial projection $\D \setminus \{p\} \to \partial \D$ we obtain an open book decomposition on $\partial X$ with binding $\partial f^{-1}(p)$. Identifying $f^{-1}(p) \cong F$,
we can write 
\[\partial X=(\partial F\times \D)\cup f^{-1}(\partial \D) \, .\]
Thus we view $\partial F\times \D$ as the tubular neighborhood of the binding $L=\partial f^{-1}(p)$, and the fibers over $\partial \D$ as its \dfn{truncated pages}. The monodromy of this open book is prescribed by that of the fibration. Any open book whose monodromy can be written as a product of positive Dehn twists can be filled by a Lefschetz fibration over the $2$-disk. In this case, we say that the open book $(L, f|_{\partial X \setminus L})$ \dfn{bounds}, or is \dfn{induced by}, the Lefschetz fibration $(X,f)$. 

There is an elementary modification of these structures: Let $f\co X \to \D$ be a Lefschetz fibration with bounded regular fiber $F$. Attach a $1$--handle to $\partial F$ to obtain $F'$, and then attach a Lefschetz $2$--handle along an embedded loop in $F'$ that goes over the new $1$--handle exactly once. This is called a \dfn{positive stabilization} of $f$. A  Lefschetz handle is attached with framing $-1$ with respect to the fiber, and therefore it introduces a positive Dehn twist on $F'$. If the focus is on the $3$--manifold, one can forget the bounding $4$--manifold and view all the handle attachments in the \linebreak $3$-manifold. Either way, stabilizations correspond to adding canceling handle pairs, so diffeomorphism types of the underlying $4-$ and $3$-manifolds do not change, whereas the Lefschetz fibration and the open book decomposition change in the obvious way. It turns out that stabilizations preserve more than the underlying topology, as we will discuss shortly.

A contact structure $\xi$ on a $3$--manifold $Y$ is said to be \dfn{supported by an open book} $\mathcal{B}=(L,f)$ if $\xi$ is isotopic to a contact structure given by a $1$--form $\alpha$ satisfying $\alpha>0$ on positively oriented tangents to $L$ and $d\alpha$ is a positive volume form on every page. When this holds, we say that the open book $(L,f)$ is \dfn{compatible with the contact structure} $\xi$ on $Y$. Giroux proved the following remarkable theorem regarding compatibility of open books and contact structures:

\begin{theorem} [Giroux \cite{Gi2}] \label{Giroux}
Let $M$ be a closed oriented $3$--manifold. Then there is a one-to-one correspondence between oriented contact structures on $M$ up to isotopy and open book decompositions of $M$ up to positive stabilizations and isotopy.
\end{theorem}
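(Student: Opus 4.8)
The plan is to realize the bijection through the natural map $\Psi$ that assigns to an open book $(L,f)$ the isotopy class of the contact structure it supports, and then to show that $\Psi$ is well defined on equivalence classes, surjective, and injective. For well-definedness one first recalls the Thurston--Winkelnkemper construction: writing $M=(\partial F\times\D)\cup f^{-1}(\partial\D)$ as in the excerpt, one builds a $1$-form that is a positive contact form near the binding and whose exterior derivative is a positive area form on every page, and one checks that for a \emph{fixed} open book the set of compatible contact forms is non-empty and convex, hence connected; so the supported contact structure is well defined up to isotopy. The remaining point is that a positive stabilization leaves $\Psi$ unchanged: stabilizing amounts to Murasugi-summing a positive Hopf band onto a page, and one verifies by a direct convex-surface argument that the stabilized page is again a convex page of the same $(M,\xi)$, so the supported contact structure is unchanged.

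For surjectivity --- existence of a supporting open book for any $\xi$ --- I would use convex surface theory. Every contact $3$-manifold admits a \emph{contact cell decomposition}: a CW structure whose $1$-skeleton $G$ is Legendrian, whose $2$-cells are discs bounded by Legendrian curves of Thurston--Bennequin invariant $-1$ relative to the disc framing, and whose $3$-cells are Darboux balls. Such a decomposition exists because any $1$-complex can be $C^0$-approximated by a Legendrian one and, after suitably subdividing the higher cells, the complement of a regular neighborhood of the $2$-skeleton becomes a disjoint union of tight balls. One then takes the \emph{ribbon} $R$ of $G$ --- a compact surface having $G$ as a spine on which $\xi$ is in standard form --- as the page and $\partial R$ as the binding; using that the $2$-cells are $tb=-1$ discs and the $3$-cells are standard, one checks that $M\setminus\partial R$ fibers over $S^1$ with page $R$ and that the resulting open book is compatible with $\xi$.

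For injectivity --- uniqueness up to positive stabilization --- I would pass back and forth between open books and contact cell decompositions. An open book compatible with $\xi$, together with a system of properly embedded arcs cutting its page into a disc, yields a contact cell decomposition ($G$ being a Legendrian realization of the spine dual to the arc system, the page being its ribbon), and conversely every contact cell decomposition arises this way; moreover a positive stabilization of the open book --- adding a cancelling pair consisting of a $1$-handle and a positive Lefschetz $2$-handle --- corresponds, up to the auxiliary arc data, to an elementary \emph{refinement} of the contact cell decomposition. It therefore suffices to show that any two contact cell decompositions compatible with $\xi$ admit a common refinement. This is proved by a Cerf-type argument: join the two decompositions by a generic $1$-parameter family of convex configurations, invoke Giroux's flexibility and genericity theorems for convex surfaces to see that along the family only finitely many elementary modifications of the dividing sets and cell structures occur, and absorb each such modification into finitely many refinements of \emph{both} decompositions.

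I expect the common-refinement statement for contact cell decompositions --- equivalently the common-stabilization statement for open books --- to be the principal obstacle: it requires the full force of convex surface theory (genericity of convex surfaces in families, Giroux's flexibility theorem) together with a careful parametrized bookkeeping, and it is precisely the part of the argument that is most delicate to make rigorous. By contrast, well-definedness (Thurston--Winkelnkemper together with connectedness of the compatible contact forms) and invariance under Hopf-band plumbing are comparatively routine, and the existence half, though substantial, reduces fairly directly to producing a contact cell decomposition.
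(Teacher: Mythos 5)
First, a point of reference: the paper does not prove this statement at all --- it is quoted from Giroux's ICM address \cite{Gi2} as background, so there is no in-paper argument to compare yours against. On its own terms, your outline does follow the architecture of Giroux's announced proof: Thurston--Winkelnkemper plus convexity (hence connectedness) of the space of compatible contact forms for well-definedness, invariance of the supported contact structure under plumbing a positive Hopf band, existence via a contact cell decomposition whose Legendrian $1$-skeleton has the page as its ribbon, and uniqueness by translating between open books and contact cell decompositions and producing common refinements. That is the correct skeleton of the standard argument.

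The genuine gap is exactly where you locate it, but you should be blunt about its size: the common-refinement statement for contact cell decompositions --- equivalently, that any two open books supporting isotopic contact structures become isotopic after positive stabilizations --- is not a delicate detail to be absorbed into ``a Cerf-type argument plus Giroux flexibility''; it \emph{is} the hard half of the theorem. Invoking genericity of convex surfaces in one-parameter families and ``finitely many elementary modifications'' presupposes a classification of those elementary moves and a verification that each one is realized by positive stabilization on \emph{both} decompositions, none of which is supplied and none of which is routine; this half was only sketched in \cite{Gi2} and a complete written account appeared in the literature only much later. There is also a smaller unproved claim in your existence step: that the complement of the ribbon of the Legendrian $1$-skeleton fibers over $S^1$ with the ribbon as page requires using the $tb=-1$ condition on the $2$-cells to extend the fibration cell by cell across the $2$- and $3$-skeleta, which you assert but do not carry out. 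As a road map your proposal is accurate; as a proof it reduces the statement to its hardest unproved ingredient.
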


We will call a Lefschetz fibration \dfn{allowable}, if all the fibers have non-empty boundaries, and if no fiber contains a \textit{closed} embedded surface. The following theorem brings all these structures together:

\enlargethispage{1cm}

\begin{theorem} [Loi--Piergallini \cite{LP}, also see Akbulut--Ozbagci \cite{AO}] \label{PALF}
An oriented compact $4$--manifold with boundary is a Stein surface, up to orientation-preserving diffeomorphisms, if and only if it admits an allowable Lefschetz fibration over the $2$-disk. Moreover, any two allowable Lefschetz fibrations over the $2$-disk bounded by the same open book carry Stein structures which fill the same contact structure induced by the boundary open book. 
\end{theorem}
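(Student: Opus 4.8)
We sketch a proof along the lines of Loi--Piergallini and Akbulut--Ozbagci, establishing the two implications and then the refinement.

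\textbf{Allowable Lefschetz fibration $\Rightarrow$ Stein structure.} Suppose $f\co X\to\D$ is an allowable Lefschetz fibration with regular fiber $F$ --- a compact surface with nonempty boundary --- and vanishing cycles $v_1,\dots,v_n$. Recall that $X$ is obtained from $F\x\D$ by attaching, for each $i$, a Lefschetz $2$--handle along $v_i$ lying on a page $F\x\{p_i\}$, with framing $-1$ relative to the page (product) framing. Since $F$ has boundary, $F\x\D$ is built out of $4$--dimensional $0$-- and $1$--handles, hence is a boundary connected sum of copies of $S^1\x\DD^3$; this carries a standard subcritical Stein structure whose convex boundary --- a connected sum of copies of $S^1\x S^2$ --- is the unique tight contact manifold in its diffeomorphism type, supported by the trivial open book $(F,\mathrm{id})$. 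Now attach the Lefschetz handles one at a time: at the $j$--th stage the boundary carries the contact structure supported by the open book $(F,t_{v_1}\cdots t_{v_{j-1}})$, and $v_j$ lies on a page of it; by the Legendrian realization principle applied to the (convex) pages, $v_j$ can be isotoped through pages to a Legendrian knot whose page framing coincides with its Thurston--Bennequin framing. The Lefschetz handle is therefore attached along a Legendrian knot with framing $tb-1$, so by Eliashberg's theorem --- in the handlebody formulation of Gompf --- the attachment preserves the Stein condition. Hence $X$ is Stein. Note that allowability is exactly what guarantees that $F$ has nonempty boundary, so that $F\x\D$ is itself a subcritical Stein domain and the open books above are genuine.

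\textbf{Stein structure $\Rightarrow$ allowable Lefschetz fibration.} Let $W$ be a compact Stein surface. By Eliashberg's structure theorem, $W$ admits a handle decomposition with handles of index $\le 2$ whose subcritical part has convex boundary a connected sum of copies of $S^1\x S^2$ with its standard tight contact structure, and with the $2$--handles attached along a Legendrian link $\Lambda$ with framing $tb(\Lambda)-1$. Following Loi--Piergallini, present $W$ as a simple branched cover $W\to\DD^4$ along a properly embedded surface $S\subset\DD^4=\D\x\D$: one braids $\Lambda$ about an unknotted axis and uses the relationship between the $tb-1$ framing and quasipositive braids to arrange that $S$ is a \emph{positively braided} surface, with $\p S\subset\p\DD^4$ the closure of a quasipositive braid. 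Isotope $S$ so that the projection $\DD^4\to\D$ restricts on $S$ to a branched cover over interior points, away from finitely many nondegenerate tangencies of $S$ with the fibers $\{\mathrm{pt}\}\x\D$. Then the composite $W\to\DD^4\to\D$ is a Lefschetz fibration: each tangency contributes a Lefschetz critical point whose vanishing cycle is prescribed by the monodromy of the branched cover, and positivity of the braided surface makes each resulting Dehn twist a positive one. Every fiber of $W\to\D$ is a branched cover of $\D$ over finitely many interior points, hence a compact surface with nonempty boundary and no closed component --- so the fibration is allowable.

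\textbf{The refinement, and the main obstacle.} For the ``moreover'' statement, one checks directly from the construction of the first paragraph that the Stein structure on $X$ induces on $\p X$ the contact structure \emph{supported by} the boundary open book of $(X,f)$: attaching a Lefschetz $2$--handle along a Legendrian curve on a page with framing $tb-1$ both adds a positive Dehn twist to the monodromy and deforms the supporting contact form so that the defining conditions --- $\alpha>0$ along the binding and $d\alpha$ a positive area form on each page --- persist. Consequently, two allowable Lefschetz fibrations over $\D$ with the same boundary open book induce on their boundaries contact structures that are both supported by that open book, hence isotopic by Giroux's theorem (see Theorem~\ref{Giroux}); so the two Stein structures fill the same contact $3$--manifold. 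I expect the main difficulty to lie in the second implication: producing the branched-cover presentation of $W$ over a \emph{positively} braided surface, and tracking how the Stein framing condition $tb-1$ on the $2$--handles translates into quasipositivity of the braid and --- after composing with the projection --- into the positivity and allowability of the resulting Lefschetz fibration. This demands careful bookkeeping of framings, of the position of the branch locus relative to the fibration, and of the effect of the braiding isotopies. By contrast, once the Legendrian realization principle for curves on pages is available, the first implication is comparatively routine.
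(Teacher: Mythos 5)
This is a background result that the paper quotes from Loi--Piergallini and Akbulut--Ozbagci without proof, so there is no internal argument to compare against; your sketch follows the standard proofs in the literature (the handle-by-handle Eliashberg/Gompf argument in one direction, the branched-cover-over-a-braided-surface argument of Loi--Piergallini in the other, and Giroux uniqueness for the ``moreover'' clause), and in outline it is the right argument.

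One point is genuinely off, though. You write that ``allowability is exactly what guarantees that $F$ has nonempty boundary.'' In this paper (and in the sources) allowability is two conditions: the fibers have nonempty boundary \emph{and} no fiber contains a closed embedded surface, the latter being equivalent to every vanishing cycle being homologically essential in the fiber. The second condition is not decorative --- it is precisely what makes your Legendrian realization step work. A null-homologous (in particular null-homotopic) vanishing cycle on a page cannot be Legendrian realized with contact framing equal to the page framing: for a curve bounding in the page, the Bennequin inequality in the tight boundary of the subcritical piece forces $tb\le -1$ relative to the surface framing, so the Lefschetz handle would be attached with framing $\ge tb$, and the Stein condition would fail (indeed, fibrations with such vanishing cycles need not be Stein). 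So where you invoke ``the Legendrian realization principle applied to the pages,'' you must also invoke homological essentiality of $v_j$, and this is exactly where the full strength of allowability enters --- not merely in ensuring that $F\times\D$ is subcritical. The rest of the sketch, including the identification of the induced contact structure with the one supported by the boundary open book and the appeal to Giroux uniqueness, is correct as stated.
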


\noindent As shown in \cite{BV}, one direction of the theorem generalizes to the case of allowable Lefschetz fibrations over arbitrary orientable surfaces with non-empty boundaries.

%\vspace{0.2in}
\newpage
% ==================================================================================================================
\section{Arbitrarily long positive factorizations} \label{Sec: Construction}
% ==================================================================================================================

In this section we prove Theorem~\ref{mainthm}. Here is the reader's guide to our proof:   
\begin{itemize}

\item In Sections~\ref{Sec: Garside} and \ref{Sec: Swap} we study a particular map of a surface of genus $2g+1$ with two boundary components which exchanges two genus $g$ subsurfaces. Using a branched cover construction we show that these maps, which we call \textit{swap maps}, can be realized as lifts of quasipositive braids. Propositions~\ref{prop:pos} and~\ref{prop:rhoproperties} sum up some properties of the swap map that are immediate from this very description. Swap maps will be the building blocks of the monodromies we will construct in the later subsections. 

\item In Section~\ref{Sec: Genus11} we picture a genus $11$ surface $F$ with two boundary components as a union of four copies of genus two surfaces with two boundary components, labeled $F_1, \ldots, F_4$, attached to two disks $D_1$ and $D_2$ with four holes; see Figure~\ref{fig:genus11}. Here we discuss a collection of swap maps as mapping classes acting on $F$, exchanging pairs $F_i$ and $F_j$, along with their interactions with mapping classes supported on these genus two subsurfaces (Proposition~\ref{prop:rho1}).

\item In Section~\ref{Sec: Examples} we review a particular subfamily of relations in $\Gamma_2^2$, which were discovered in \cite{BKM}, involving a single (varying) commutator and having unbounded length as products of positive Dehn twists. We will then show how to encode this family of commutators on a fixed genus $2$ subsurface of $\Sigma_{11}^2$ by forming a particular product of swap maps. This will give us the first example of a map $\Phi \in \Gamma_{11}^2$ with arbitrarily long positive factorizations; see Theorem~\ref{thm:minimal example}. We also observe that  the same construction hands us similar mapping classes in every $\Gamma_{g}^2$ with $g=11+4l$, $l \in \N$.

\item Lastly, in Sections~\ref{Sec: Framing} and~\ref{Sec: Extending}, we show how to extend the above factorizations of $\Phi$ to boundary multitwist in $\Sigma_{11}^2$. For this, we introduce a simple calculus of the swap maps acting on the genus $11$ surface $F$ as before, which includes the relations satisfied by the braid group; see Proposition~\ref{lm:rhoasbraid}. In particular, we see that it will be enough to understand how the swap maps identify different subsurfaces $F_i$ by keeping track of how they act as framed $4$-braids on the base surfaces $D_1$ and $D_2$ of $F \cong \Sigma_{11}^2$. We then extend these results to higher genera mapping classes, which completes the proof of our main theorem. 
\end{itemize}

% ==================================================================================================================
\subsection{The Garside half-twist}\label{Sec: Garside} \
% ==================================================================================================================
The \emph{Garside half-twist} is a braid $\Delta$ which looks like a half-twist placed in a ribbon: one aligns all the strands of a braid on a ribbon and twists the ribbon halfway around to the right. The resulting braid is shown in Figure \ref{fig:garside}. 

\begin{figure}[ht!]
\includegraphics[width = 5in]{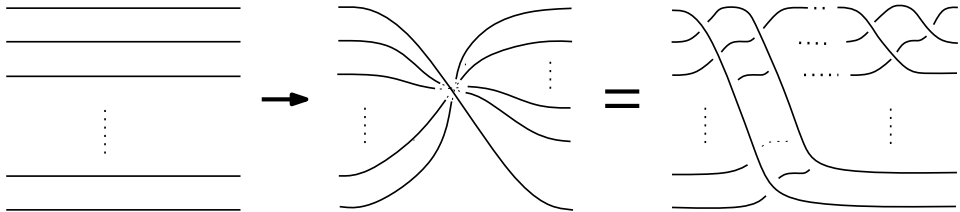}
\caption{The Garside half-twist $\Delta$ \label{fig:garside}. In the braid word description in Equation~\ref{eq:braid}, we are reading the braid from right to left.}
\end{figure}

The $n$-stranded Garside half-twist has a standard braid presentation with $b_i$ the braid half-twist that exchanges the $i$ and $i+1$ strands. Reading the braid from right to left, in $B_n$ we have

\begin{align} \label{eq:braid} \Delta &= (b_1 \cdot b_{2} \cdots b_{n-2} \cdot b_{n-1})(b_{1} \cdot b_2 \cdots b_{n-2}) \cdots (b_1 \cdot b_2)(b_1) . \end{align}

If we think of $\Delta$ as acting on the unit disk $\D$ with marked points $p_1, \dots, p_{n}$ sitting on the $x$-axis, then we have a geometric picture of the map: $\Delta$ is a rigid, 180$^\circ$ counter-clockwise rotation of the disk, followed by an isotopy supported near the boundary which slides the boundary back in the clockwise direction to where it started. See Figure \ref{fig:garside map}.

\begin{figure}[ht!]
\includegraphics[width = 3.8in]{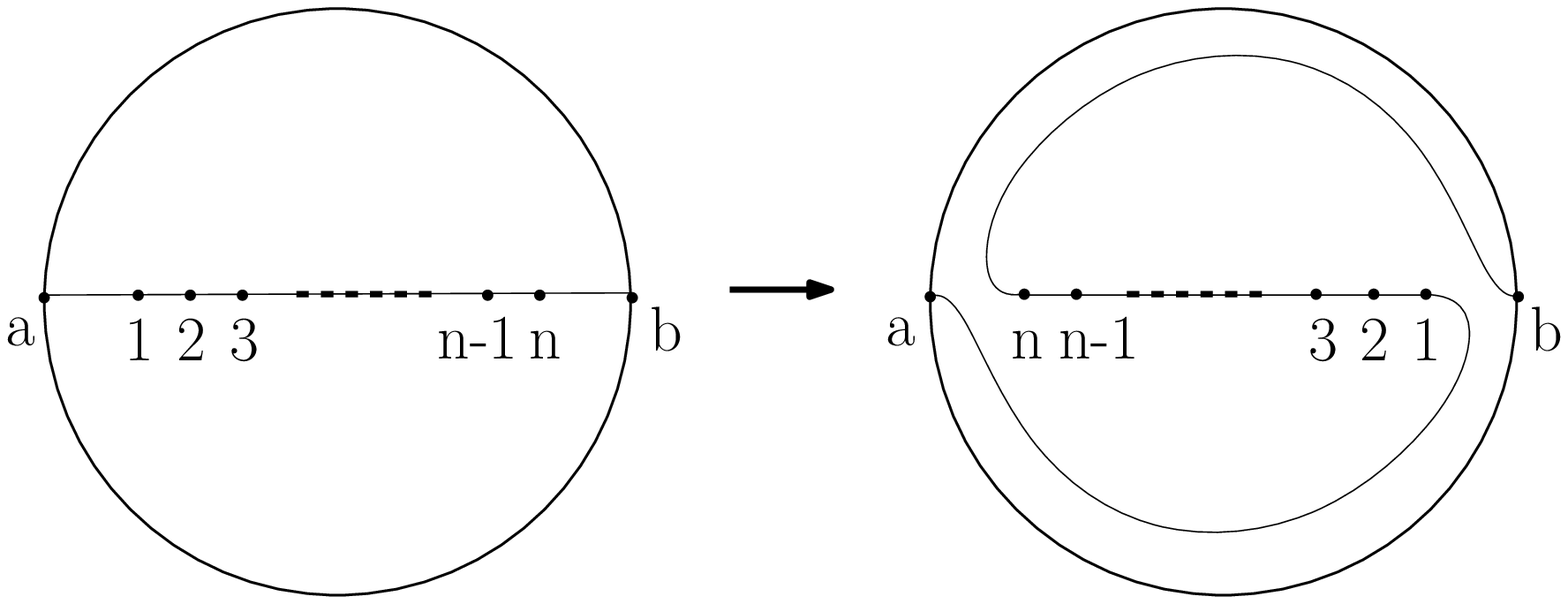}
\caption{ \label{fig:garside map}}
\end{figure}

We want to lift $\Delta$ along with its factorization and geometric description to the $2$-fold branched cover $S = \Sigma_g^2$ of the $2$-disk, to an involution $\bdel$ on $S$, so we assume $n= 2g+2$.

Geometrically, by aligning $S$ in $\R^3$ above $\D$ so that the branched cover involution on $S$ is a rotation about a line parallel to the $x$-axis, we can see $\bdel$ as a 180$^\circ$ right-handed rotation about the $z$-axis, again followed by a left-handed slide of the top boundary circle and a right-handed slide of the bottom boundary circle (as viewed and acted from above the $xy$-plane) back to where they started. (Because the orientations are opposed when we view the branched covering from the top or the bottom, these boundary rotations go in opposite directions, as shown in Figure \ref{fig:bdel lifted}.)

\begin{figure}[ht!] 
\begin{tabular}{cc}
\includegraphics[width = 2in]{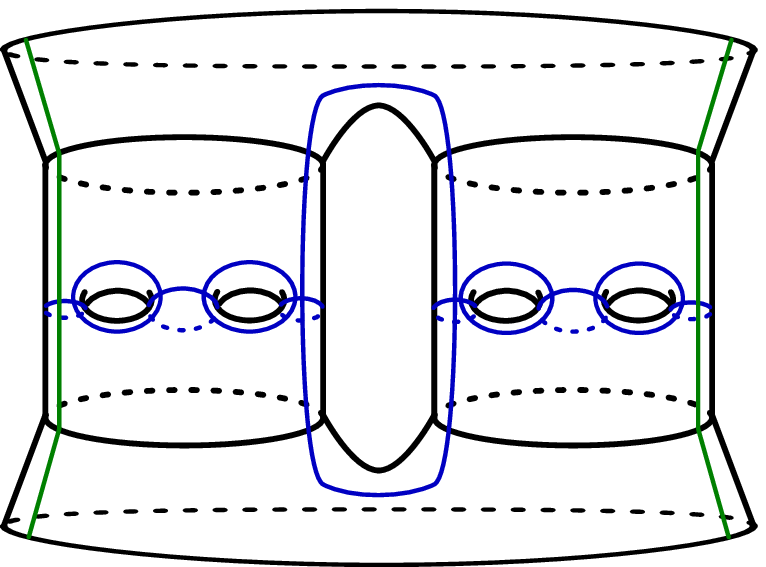} &
\includegraphics[width = 2in]{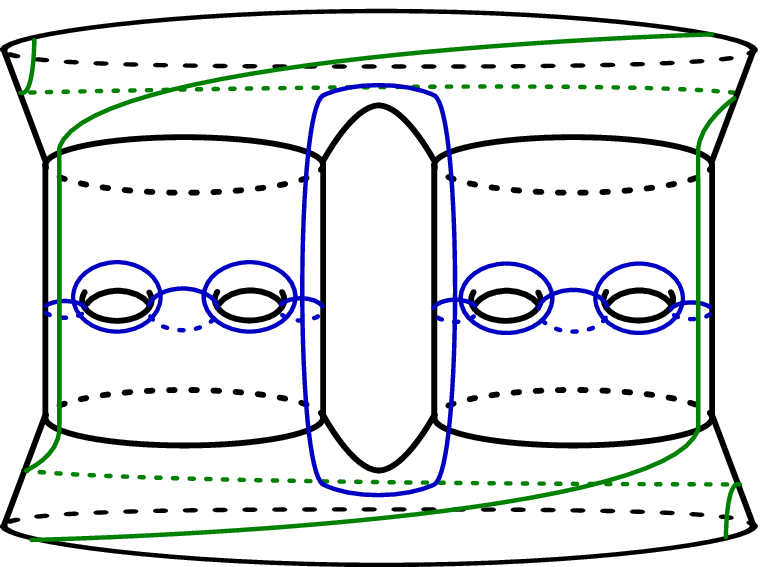}
\end{tabular}
\caption{The action of the Garside half-twist on the double branched covering surface $S = \Sigma_g^2$, illustrated for $g=5$. Notice the two disjoint subsurfaces of $S$ which are both diffeomorphic to $\Sigma_2^2$ and are 
exchanged under this action.} \label{fig:bdel lifted}
\end{figure}

$\bdel$ then also has a positive Dehn twist factorization by lifting the braid factorization of $\Delta$. If we let $t_i$ denote the lift of $b_i$ then 
\begin{align}
\bdel &= (t_1 t_2 \cdots t_{2g} t_{2g+1}) (t_1 t_2 \cdots t_{2g}) \cdots (t_1 t_2)(t_1) .
\end{align}
It is important to note that the square of $\Delta$ is a positive full braid twist, which is isotopic to a positive Dehn twist about a boundary parallel circle on $\D$ enclosing all the marked points. Because $n$ is even, this positive Dehn twist lifts to two positive Dehn twists in $S$, one about each boundary component. Calling this boundary multitwist $M_\bdry$, we have an equivalence
\[ \bdel^2 \simeq M_{\bdry} \]
up to isotopy in $S$ (fixing $\p S$).

% ==================================================================================================================
\subsection{Swap maps} \label{Sec: Swap} \
% ==================================================================================================================

From here on we assume that the genus $g$ of $S$ is odd and write $g = 2g'+1$. We will work with a decomposition of $S$ as two disjoint copies of $\Sigma_{g'}^2$, denoted $F_1$ and $F_2$, glued along two pairs of pants. (See Figure~\ref{fig:bdel lifted} for $g=5$.) As before, we will use diffeomorphisms and factorizations coming as lifts of braids and so we describe this surface as a double branched covering of the $2$-disk. Let $\D$ be the unit disk and arrange $n=2g+2=4g'+4$ marked points $p_1, \dots, p_n$ along the $x$-axis. Collect $ \{p_1, \dots, p_{2g'+2}\}$ in a smaller disk $R_1$ and $\{p_{2g'+3}, \dots, p_{4g'+4} \}$ in $R_2$ so that $D \setminus (R_1 \cup R_2)$ is a pair of pants. We can now see $S$ as the $2$-fold branched cover of $\D$ branched over $ \{p_1, \dots, p_{n}\}$, where $R_1$ and $R_2$ lift to the two disjoint genus $g'$ subsurfaces, $F_1$ and $F_2$ resp., and $\D \setminus (R_1 \cup R_2)$ lifts to the two disjoint pairs of pants connecting them. 

The \textit{swap map} is a slight\footnote{We use $\rho$ rather than $\bdel$ mainly because it facilitates our arguments in the last subsection. It also happens to have a shorter positive factorization than $\bdel$.} modification of $\bdel$. For $S$, $F_1$ and $F_2$ as above, let $M_i$ be the positive Dehn multitwist about the boundary of $F_i$, for $i=1,2$. The \textbf{swap map} is then an orientation-preserving self-diffeomorphism $\rho$ of $S$ defined by  
\begin{align} \rho \simeq& \bdel \cdot M_1^{-1} \cdot M_2^{-1} , \label{eqn:rho} \end{align}
which, under an identification of $S$ with $\Sigma_g^2$ descends to an element of the mapping class group $\Gamma_g^2$. Later we will impose some additional restrictions on the map $\rho$ as a diffeomorphism, though again, we never completely specify the diffeomorphism. These restrictions serve only to make our explanations easier. 

The definition of $\rho$ (and $\bdel$) now allows us to conclude: 

\begin{proposition} \label{prop:pos} 
The swap map $\rho$ can be factorized as a product of $g+1\;(= 2g'+2)$ positive Dehn twists.  
\end{proposition}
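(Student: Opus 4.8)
The plan is to reduce the count from the Garside half-twist's factorization by absorbing the boundary multitwists $M_1$ and $M_2$ into it. Recall from the previous subsection that $\bdel$ has the positive factorization
\begin{align*}
\bdel &= (t_1 t_2 \cdots t_{2g} t_{2g+1})(t_1 t_2 \cdots t_{2g}) \cdots (t_1 t_2)(t_1),
\end{align*}
which is a product of $\binom{2g+2}{2}/(2g+2) \cdot (2g+2) = \tfrac{1}{2}(2g+1)(2g+2)$ --- more simply, $\sum_{k=1}^{2g+1} k = (g+1)(2g+1)$ --- positive Dehn twists. By definition $\rho \simeq \bdel \cdot M_1^{-1} \cdot M_2^{-1}$, so naively $\rho$ is a product of positive twists times some negative twists, which is not yet a positive factorization. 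The key observation is that $M_1^{-1} M_2^{-1} = (M_\partial \spine)^{-1}$ restricted to the subsurfaces, and $\bdel^2 \simeq M_\partial$ globally; but more to the point, we should think of $\rho$ as itself the lift of a modified braid that is \emph{still quasipositive} but shorter.

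So the first step is to identify the braid downstairs whose lift is $\rho$. Since $M_i$ is the lift of a Dehn twist about a curve in $\D$ enclosing the marked points of $R_i$, and since the square of the $2g'+2$-stranded Garside half-twist on $R_i$ equals that twist, the natural candidate is the braid $\Delta \cdot \Delta_{R_1}^{-1} \cdot \Delta_{R_2}^{-1}$ where $\Delta_{R_i}$ is the Garside half-twist on the strands in $R_i$ --- but this still has negative crossings. The cleaner route, and the one I would take, is purely geometric: $\rho$ is the lift of the braid that rigidly rotates $\D$ by $180^\circ$ while rotating each sub-disk $R_i$ back by $180^\circ$, i.e. a half-twist interchanging the two \emph{blocks} $R_1$ and $R_2$ as rigid units. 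Reading off a braid word for this ``block half-twist'': it is the product of the $2g'+2$ band generators that carry each strand of $R_1$ across to the corresponding position in $R_2$, performed as one coherent positive half-twist of the band containing all of $R_1 \cup R_2$ but leaving the internal structure of each block untouched. That band half-twist is a product of $2g'+2$ positive half-twists (one ``sheet'' per strand being swapped past the block), hence quasipositive, and its lift is a product of $2g'+2 = g+1$ positive Dehn twists.

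Concretely, the second step is to write this block-swap braid as $\beta = (c_{2g'+2} \, c_{2g'+1} \cdots c_2 \, c_1)$ where $c_j$ is the positive half-twist along the arc that moves the $j$-th strand of $R_1$ to its destination in $R_2$ past the block, and to check directly that $\Delta = \beta \cdot \Delta_{R_1} \cdot \Delta_{R_2}$ in $B_n$ (this is the standard ``Garside element factors through a block'' identity, provable by a picture or by the semidirect-product structure of the parabolic subgroup generated by $R_1$- and $R_2$-strand generators together with the coset rotation). Squaring, $\beta^2$ is the full twist of the block minus the internal full twists, which lifts to $M_\partial \cdot M_1^{-1} M_2^{-1}$, consistent with $\rho^2 \simeq \bdel^2 M_1^{-2} M_2^{-2} \cdot (\text{stuff})$ --- so a sanity check on signs is warranted here. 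Then lifting the $g+1$ positive half-twists $c_j$ to $g+1$ positive Dehn twists $t_{a_j}$ along their vanishing cycles yields $\rho = t_{a_{2g'+2}} \cdots t_{a_1}$, a positive factorization of length $g+1 = 2g'+2$, as claimed.

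The main obstacle will be the bookkeeping in the second step: verifying the braid identity $\Delta = \beta \cdot \Delta_{R_1} \cdot \Delta_{R_2}$ with the correct orientation conventions (the paper reads braids and mapping classes right-to-left, and the two boundary slides in the lift of $\Delta$ go in \emph{opposite} directions, which is exactly what makes $M_1^{-1}$ and $M_2^{-1}$ appear rather than $M_1 M_2$). One has to be careful that the ``internal'' Garside twists $\Delta_{R_i}$ appearing here lift to $\bdel_{F_i}$, whose \emph{square} is $M_i$, and reconcile this with the stated formula $\rho \simeq \bdel \cdot M_1^{-1} \cdot M_2^{-1}$ rather than $\bdel \cdot \bdel_{F_1}^{-1}\bdel_{F_2}^{-1}$; I expect this is resolved by noting that the block-swap braid $\beta$, unlike a single ``block half-twist,'' is chosen so that its lift does not rotate the interiors of $F_1, F_2$ at all, so the only correction needed relative to $\bdel$ is by the full boundary twists $M_i$, not the half-twists. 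Once the conventions are pinned down the Dehn-twist count is immediate from quasipositivity of $\beta$ together with the fact that lifting a positive half-twist under the branched double cover produces a single positive Dehn twist.
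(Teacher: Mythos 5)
Your overall strategy --- identify the braid downstairs whose lift is $\rho$ and exhibit a quasipositive factorization of it with $2g'+2$ factors --- is the same as the paper's, but your identification of that braid is wrong, and the factorization you propose for it is impossible. Since $M_i$ is the lift of the positive Dehn twist about $\partial R_i$, which as a braid is the \emph{full} twist $T_i=\Delta_{R_i}^{2}$ on the strands in $R_i$ (a premise you state correctly), the braid lifting to $\rho=\bdel\, M_1^{-1}M_2^{-1}$ is $\Delta\,\Delta_{R_1}^{-2}\Delta_{R_2}^{-2}$, not your $\beta=\Delta\,\Delta_{R_1}^{-1}\Delta_{R_2}^{-1}$, the rigid block half-twist; the two differ by a negative internal half-twist in each block. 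The sanity check you deferred settles this: $\beta^2=\Delta^2\,T_1^{-1}T_2^{-1}$ lifts to $M_\bdry M_1^{-1}M_2^{-1}$, whereas $\rho^2\simeq M_\bdry M_2^{-2}M_1^{-2}$ by Proposition~\ref{prop:rhoproperties}, so the lift of $\beta$ is not $\rho$.

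Moreover $\beta$ cannot be a product of $2g'+2$ positive band generators. Its exponent sum (image under the abelianization $B_n\to\Z$) is $(2g'+2)^2$, since in the rigid block exchange every strand of $R_1$ crosses every strand of $R_2$ exactly once; each positive band generator contributes $+1$, so any quasipositive factorization of $\beta$ must have exactly $(2g'+2)^2$ factors. Your ``one sheet per strand'' count conflates sliding one strand past a block --- which costs $2g'+2$ crossings and is not a half-twist along an embedded arc --- with a single band generator. The correct braid $\Delta\,T_1^{-1}T_2^{-1}=\beta\,\Delta_{R_1}^{-1}\Delta_{R_2}^{-1}$ does have exponent sum $(2g'+2)^2-2\binom{2g'+2}{2}=2g'+2$, so the count in the statement is at least consistent; but one still has to \emph{exhibit} a quasipositive factorization, which is precisely the content of Figure~\ref{fig:rhoposfac}: after cancelling the internal half-twists one is left with a ribbon picture that visibly decomposes into $2g'+2$ positive half-twists along arcs, each joining a marked point of $R_1$ to one of $R_2$, and these lift to the $g+1$ positive Dehn twists. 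Your identity $\Delta=\beta\,\Delta_{R_1}\Delta_{R_2}$ is true, but it does not by itself produce that factorization.
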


\begin{proof} Under the double branched covering, each quasipositive braid half-twist along an arc between two branched points lifts to a positive Dehn twist along a curve that covers the arc. The claim follows by observing that $\rho$ is a lift of a braid which admits a quasipositive factorization. The particular factorization is shown in Figure \ref{fig:rhoposfac}. 

\begin{figure}[htp!]
\labellist
\pinlabel {\Huge $\Delta$} at 85 90
\pinlabel {\LARGE $\Delta_1^{-2}$} at 170 138
\pinlabel {\LARGE $\Delta_2^{-2}$} at 170 40
\endlabellist
\begin{center}
\begin{tabular}{m{2in}m{.6cm}m{2in}}

\includegraphics[width = 1.9in]{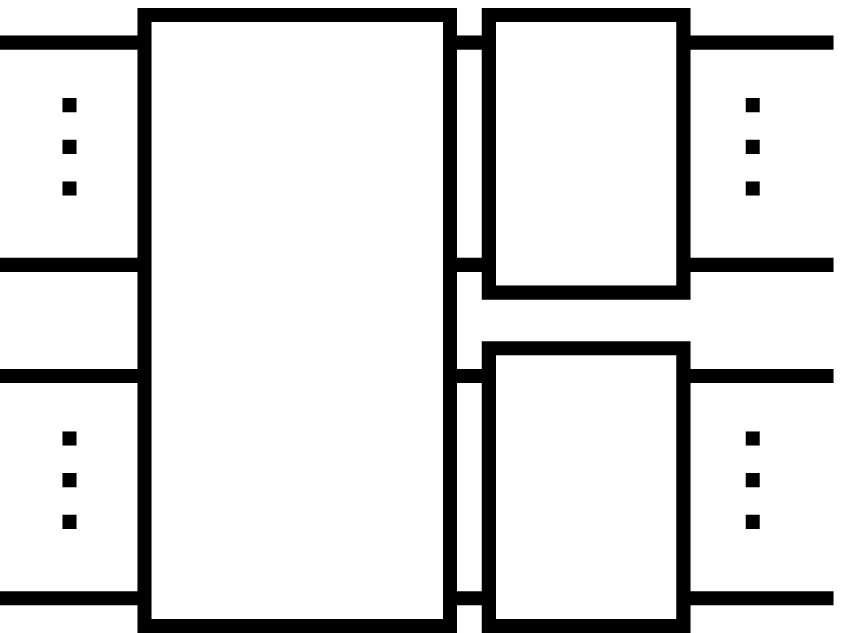} & 
\Huge{$\simeq$} &
\includegraphics[width = 1.9in]{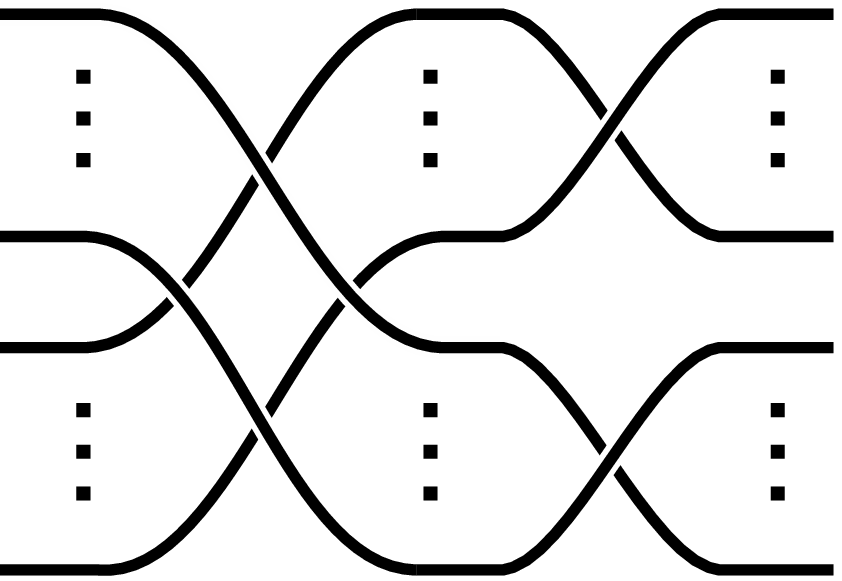} 
\end{tabular}
\vspace{12pt}
\Huge{ $\simeq $} \\%&
\vspace{6pt}
\includegraphics[width = 1.9in]{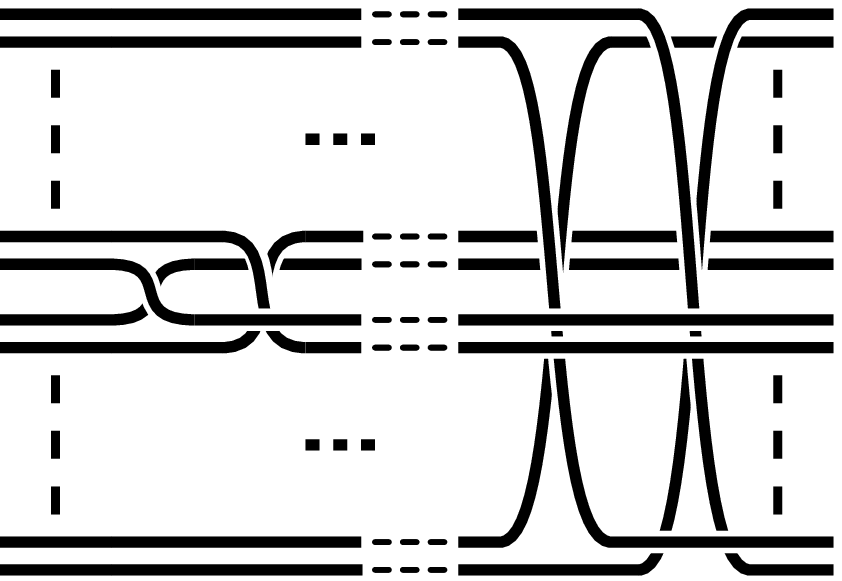}
\end{center}
\caption{A quasipositive factorization of the $4g'+4$ stranded braid $\Delta \cdot T_1^{-1} \cdot T_2^{-1}$, which lifts to $\rho$. Here $T_1$ is a full negative twist along the first $2g+2'$-strands in $R_1$ and $T_2$ in $R_2$. In the first diagram we see the obvious description of the braid $\Delta \cdot T_1^{-1} \cdot T_2^{-1}$, in the second we have a ribbon picture for it after canceling half-twists along $R_1$ and $R_2$. The final image gives the quasipositive factorization explicitly. \label{fig:rhoposfac}}
\end{figure}

\begin{figure}[htp!]%
\labellist
\pinlabel $c_1$ at 213 79
\pinlabel $c_2$ at 188 89
\pinlabel $c_3$ at 167 82
\pinlabel $c_4$ at 149 95
\pinlabel $c_5$ at 143 87 
\pinlabel $c_6$ at 118 89
\pinlabel $\gamma_3$ at 252 70
\endlabellist
\includegraphics[width=4in]{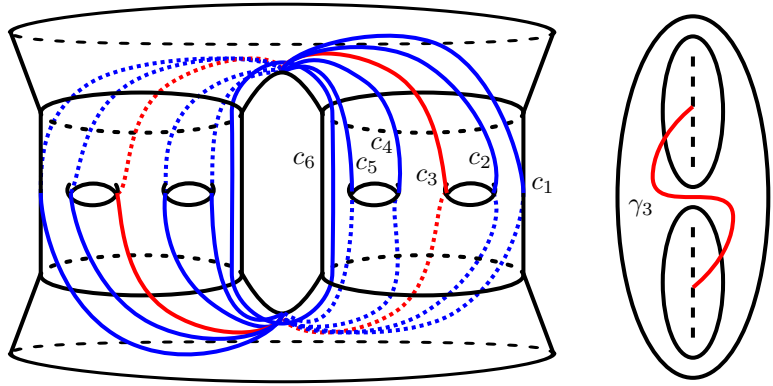}%
\caption{The left gives the positive factorization of $\rho$. The right describes a quasipositive half twist along the arc $\gamma_3$ from the factorization in Figure~\ref{fig:rhoposfac} which lifts to the Dehn twist along $c_3$ (in red) on the left. Using these Dehn twists $\rho = t_{c_6} \cdots t_{c_1}$.}%
\label{fig:rhoposfac2}%
\end{figure}
 
\end{proof}

Let us also note some properties of $\rho$, which are immediate from its geometric description:

\begin{proposition} \label{prop:rhoproperties} 
There exists a swap map $\rho$ on $S$, exchanging the subsurfaces $F_1$ and $F_2$ as shown above, so that:
\begin{enumerate}
\item $\ds \rho \big|_{F_2} \rho \big|_{F_1} = id \big|_{F_1}$ and $\ds \rho \big|_{F_1} \rho \big|_{F_2} = id \big|_{F_2}$
\item $\ds \rho^2 \big|_{F_1 \cup F_2} = id \big|_{F_1 \cup F_2}$ 
\item $\ds \rho^2 \simeq M_\p \cdot M_2^{-2} \cdot M_1 ^{-2}$
\end{enumerate}
\end{proposition}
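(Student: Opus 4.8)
The plan is to derive all three statements from the definition $\rho \simeq \bdel \cdot M_1^{-1} \cdot M_2^{-1}$ together with the two geometric facts already established: first, that $\bdel$ is (up to isotopy fixing $\p S$) the lift of the Garside half-twist, hence acts on $S$ as a $180^\circ$ rotation adjusted by boundary slides and in particular \emph{exchanges} the two subsurfaces $F_1$ and $F_2$; and second, that $\bdel^2 \simeq M_\p$, the boundary multitwist. The key observation to set up carefully at the outset is that $M_1$ and $M_2$ are supported in disjoint collar neighborhoods of $\p F_1$ and $\p F_2$, that these collars are disjoint from each other, and that one may choose the diffeomorphism $\rho$ (using the freedom the authors reserve to impose ``additional restrictions'') so that $\rho$ carries the collar of $\p F_1$ to the collar of $\p F_2$ by an orientation-preserving map and vice versa. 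With such a choice, $\rho$ conjugates $M_1$ to $M_2$ and $M_2$ to $M_1$: $\rho M_1 \rho^{-1} \simeq M_2$ and $\rho M_2 \rho^{-1} \simeq M_1$. The same is true with $\bdel$ in place of $\rho$, since $\bdel$ likewise swaps the two subsurfaces; one should check the choices are compatible so that $M_i$ and $\bdel$ (or $\rho$) interact as expected.

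For part (1): I would argue that $\rho$ restricted to $F_1$ is a diffeomorphism $F_1 \to F_2$, and $\rho$ restricted to $F_2$ is a diffeomorphism $F_2 \to F_1$, simply because $\rho$ exchanges the two subsurfaces (the corrections $M_1^{-1}, M_2^{-1}$ are supported near the boundaries and, after the choice above, can be arranged so that $\rho|_{F_i}$ is literally $\bdel|_{F_i}$ composed with a boundary-collar map, preserving the property of mapping $F_i$ onto $F_{3-i}$). Then $\rho|_{F_2} \circ \rho|_{F_1}$ is a self-diffeomorphism of $F_1$, and the claim is that it is isotopic to the identity rel $\p F_1$. Since $\bdel$ comes from a rigid involution that genuinely squares to (a boundary twist times) the identity on the interior, $\bdel|_{F_2}\circ\bdel|_{F_1} = \bdel^2|_{F_1}$, which is a power of the boundary twist $M_1$ — but in the composition defining $\rho$ these boundary twists are exactly what the $M_i^{-1}$ factors are designed to kill. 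Tracking this bookkeeping gives $\rho|_{F_2}\rho|_{F_1} = \mathrm{id}|_{F_1}$, and symmetrically for $F_2$; this is essentially a restatement of the geometric normalization of $\rho$, and I would phrase it as such rather than as a computation. Part (2) is then immediate: $\rho^2|_{F_1 \cup F_2}$ acts on $F_1$ as $\rho|_{F_2}\rho|_{F_1} = \mathrm{id}$ and on $F_2$ as $\rho|_{F_1}\rho|_{F_2} = \mathrm{id}$.

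For part (3), I would compute in $\Gamma_g^2$:
\[
\rho^2 \simeq \bdel M_1^{-1} M_2^{-1} \bdel M_1^{-1} M_2^{-1} \simeq \bdel^2 \,(\bdel^{-1} M_1^{-1} M_2^{-1}\bdel)\, M_1^{-1} M_2^{-1}.
\]
Using $\bdel^2 \simeq M_\p$ and the fact that $\bdel$ swaps the subsurfaces, so that $\bdel^{-1} M_1 \bdel \simeq M_2$ and $\bdel^{-1} M_2 \bdel \simeq M_1$ (and noting that $M_1, M_2, M_\p$ all commute, being supported on disjoint boundary collars), this becomes
\[
\rho^2 \simeq M_\p \, M_2^{-1} M_1^{-1} \, M_1^{-1} M_2^{-1} \simeq M_\p \, M_2^{-2} \, M_1^{-2},
\]
which is exactly the asserted equivalence.

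The main obstacle is \emph{not} any of these algebraic manipulations but the justification that one may choose the diffeomorphism $\rho$ so that the conjugation relations $\rho M_i \rho^{-1}\simeq M_{3-i}$ and $\bdel M_i \bdel^{-1}\simeq M_{3-i}$ hold on the nose, and so that the leftover boundary twisting in $\rho|_{F_2}\rho|_{F_1}$ cancels precisely. This requires being explicit about the collar neighborhoods of $\p F_1$ and $\p F_2$ in $S$ (which sit over neighborhoods of $\p R_1$, $\p R_2$ in $\D$), about how the $180^\circ$ rotation underlying $\bdel$ moves these collars and with what induced framing, and about the ``additional restrictions'' on $\rho$ alluded to after equation~\eqref{eqn:rho}. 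Once that normalization is pinned down — essentially a matter of choosing $\rho$ to agree with the geometric picture in Figure~\ref{fig:bdel lifted} near the relevant boundaries — parts (1)–(3) follow formally, and I would present (1) and (2) as consequences of the normalization and (3) as the short conjugation computation above.
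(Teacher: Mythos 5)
Your proposal is correct and takes essentially the route the paper intends: the paper gives no written proof at all, asserting only that these properties are ``immediate from the geometric description'' of $\rho$, and your conjugation computation for (3), using $\bdel^2\simeq M_\p$ together with $\bdel^{-1}M_i\,\bdel\simeq M_{3-i}$ and the commutativity of the boundary multitwists, is exactly the intended argument. One small correction to your reasoning for (1): $\bdel^2\big|_{F_1}$ is literally the identity --- the underlying rigid rotation is an involution and the compensating boundary slides are supported near $\p S$, which is disjoint from $F_1\cup F_2$ --- so it is not a nontrivial power of $M_1$ that the $M_i^{-1}$ factors must ``kill''; rather, the normalization you need is that the annuli supporting $M_1^{-1}$ and $M_2^{-1}$ be chosen in the pairs of pants $S\setminus(F_1\cup F_2)$, so that $\rho\big|_{F_i}=\bdel\big|_{F_i}$ on the nose. (If those supports were instead taken inside the $F_i$, the composition $\rho\big|_{F_2}\rho\big|_{F_1}$ would come out to $M_1^{-2}\big|_{F_1}$ and (1) would fail.) This is precisely the kind of ``additional restriction'' on the diffeomorphism $\rho$ that the paper reserves the right to impose, and it is why the proposition is phrased as an existence statement; with that one adjustment your write-up is complete.
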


% ==================================================================================================================
\subsection{Swapping subsurfaces of a genus $11$ surface} \label{Sec: Genus11} \
% ==================================================================================================================

Let $F$ be a surface diffeomorphic to $\Sigma_{11}^2$ and $F_1, \dots, F_4$ be disjoint subsurfaces in $F$ as shown in Figure \ref{fig:genus11}, each diffeomorphic to $\Sigma_2^2$. This decomposes $F$ into $F_1, \dots, F_4$ and two \textit{base surfaces} $D_1$ and $D_2$, each a $2$-disk with $4$ holes, such that each $F_i$ has one boundary component glued to the boundary of a hole in $D_1$ and the other boundary component to one in $D_2$. The outer boundaries of $D_1$ and $D_2$ make up the two boundary components of $F$. 

Additionally, there are four genus 5 subsurfaces $F_{ij}$, $1 \leq i < j \leq 4$, each diffeomorphic to $\Sigma_5^2$, obtained by taking the union of $F_i$ and $F_j$ and strips connecting the two along the arcs indicated in Figure~\ref{fig:genus11}. Each $F_{ij}$ is determined by a pair of holes and a proper arc $a_{ij}$ connecting them in each $D_i$. 

\begin{figure}[ht!] 
\includegraphics[width = \textwidth]{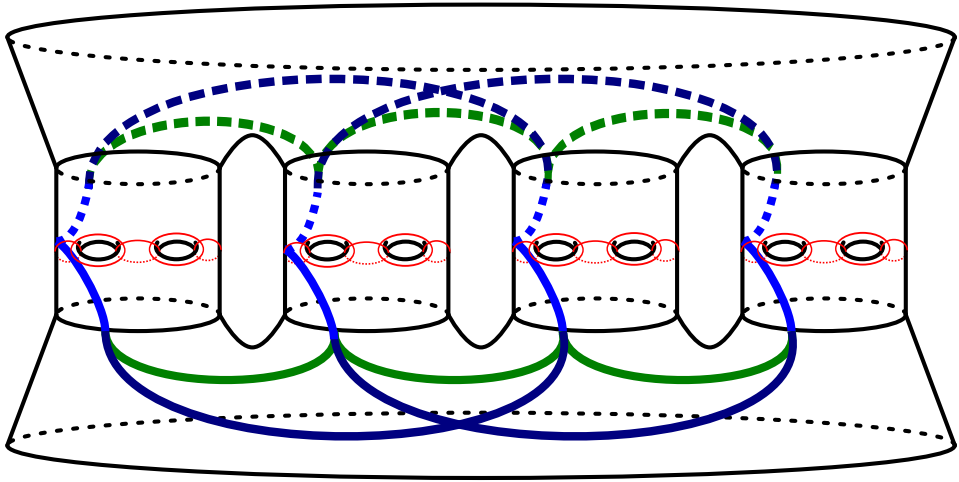} 
\caption{The genus $11$ surface $F$, containing genus $2$ subsurfaces $F_i$, $i=1, \dots, 4$ and genus $5$ subsurfaces $F_{ij}$,$1 \leq i < j \leq 4$, where $F, F_i, F_{ij}$ all have two boundary components. There is a collection of decorated circles on each $F_i$ which we will use to characterize the action of our swap maps on $F$. \label{fig:genus11}}
\end{figure}

In order to keep track of these identifications, we choose a chain of circles contained in $F_i$, as well as an arc, properly embedded in $F_i$, which connects the two boundary components through the one end component of the chain. We denote these arcs in $F_i$ by $a^i$, and the chain of circles by $c^i_1, \dots, c^i_{5}$. These arcs and circles are shown in Figure~\ref{fig:genus11}, along with the strips which connect $F_i$ and $F_j$. As this gives an arc decomposition of $F_i$, any diffeomorphism of $F_i$ is determined by where it sends this collection. We choose these arcs and circles by considering $F_i$ as the double branched cover of $\D$ as before, and then lifting the arcs that lay on the same axis as the branched points in $\D$ along with the arc connecting $p_1$ to the edge of the disk as in Figure~\ref{fig:garside map}. Moreover, the two chains in $F_i$ and $F_j$ along with the proper arcs $a^i$, $a^j$, and $a_{ij}$ in $F_{ij}$ give a chain which identifies $F_{ij}$ with our model surface $S$ from the previous subsection. This identification sends $F_i$ to $F_1$ and $F_j$ to $F_2$ and sends the chains indicated in Figure~\ref{fig:genus11} to the chain shown in Figure~\ref{fig:bdel lifted}. 

Using the chains to identify $F_{ij}$ with the surface $S$ translates the diffeomorphisms $\bdel$ and $\rho$ to a map on $F_{ij}$ which \emph{swaps} the two subsurfaces $F_i$ and $F_j$. Let $\bdel_{ij}$ and $\rho_{ij}$ denote these maps. By construction, each of $\bdel_{ij}$ and $\rho_{ij}$ behaves nicely with respect to the chains in $F_i$ and $F_j$. In particular, the identifications of subsurfaces $F_{ij}$ with the subsurfaces $F_1$ and $F_2$ of $S$ agree for $F_i$ and $F_j$ for all $i, j$. Thus, for example: 

\begin{itemize}
\item $\rho_{ij}$ and $\bdel_{ij}$ send the curve $c^i_k$ to $c^j_k$ and similarly $c^j_k$ to $c^i_k$ 
\item $\rho_{23} \rho_{12}$ sends the subsurface $F_1$ to the subsurface $F_3$ in the same way $\rho_{13}$ does. That is, 
\end{itemize}

\begin{align}  \rho_{23} \rho_{12} \big|_{F_1} = & \rho_{13}\big|_{F_1} . \end{align}

Under the above identifications of subsurfaces $F_i$, for any mapping class element $\ds A \in \Gamma_2^2$, let us write $A_i$ for the mapping class element in $\Gamma_{11}^2$ acting as $A$ on the subsurface $F_i$ and as identity on the rest of $F$. We then denote by $\rho_{ij}^A$, the map $A_i \rho_{ij} A_i^{-1}$. The map $\rho_{ij}^A$ also exchanges $F_i$ and $F_j$ but it changes the identifications: $\rho_{ij}^A$ maps $F_i$ to $F_j$ under the map $A^{-1}$ instead.

\begin{proposition} \label{prop:rho1}
For any $\ds A \in \Gamma_2^2$, $\rho_{ij}$ satisfy the following relations in $\Gamma_{11}^2$
\begin{enumerate}
\item $A_i \rho_{ij} = \rho_{ij} A_j$, $A_j \rho_{ij} = \rho_{ij} A_i$
\item $A_i \rho_{ij} A_i^{-1} = \rho_{ij}^A = A_j^{-1} \rho_{ij} A_j $
\end{enumerate}
\end{proposition}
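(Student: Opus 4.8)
The plan is to verify both relations directly from the geometric description of the swap maps established in the previous subsection, treating $\rho_{ij}$ as a diffeomorphism that restricts to specific identifications of the subsurfaces $F_i$ and $F_j$ with the model subsurfaces $F_1$ and $F_2$ of $S$. The key structural fact I would invoke is that $\rho_{ij}$ is supported on $F_{ij}$ and carries $F_i$ onto $F_j$ (and $F_j$ onto $F_i$) via the canonical identifications built from the chosen chains of circles, while being identity outside $F_{ij}$; dually, $A_i$ is supported on $F_i$ alone. So the composite $A_i \rho_{ij}$ first applies $\rho_{ij}$ and then applies $A$ on $F_i$ — but by Proposition~\ref{prop:rhoproperties}(1), the ``new'' copy of $F_i$ after applying $\rho_{ij}$ is precisely the image of $F_j$, so the net effect of $A$ on this piece is to apply $A$ to what came from $F_j$, i.e. the same as first doing $A$ on $F_j$ and then swapping. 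This is exactly the content of relation (1), and I would phrase the argument as: on $F_j$ the map $A_i\rho_{ij}$ acts as $A|_{F_i}$ precomposed with $\rho_{ij}|_{F_j}$, which equals $\rho_{ij}|_{F_j}$ followed by $A$, matching $\rho_{ij}A_j$; on $F_i$ both sides agree because $A_j$ is identity there and $\rho_{ij}$ does the swap; and off $F_{ij}$ both sides are identity. The second equation of (1) follows by the same reasoning with the roles of $i$ and $j$ interchanged, using $\rho_{ij}|_{F_1}\rho_{ij}|_{F_2}=\mathrm{id}|_{F_2}$.

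For relation (2), the identity $A_i \rho_{ij} A_i^{-1} = \rho_{ij}^A$ is just the definition of $\rho_{ij}^A$, so nothing is to be proved there. The substantive claim is $\rho_{ij}^A = A_j^{-1}\rho_{ij}A_j$, equivalently $A_i \rho_{ij} A_i^{-1} = A_j^{-1}\rho_{ij}A_j$, equivalently $A_j A_i \rho_{ij} = \rho_{ij} A_j A_i$ after rearranging — but more efficiently I would just chain the two halves of relation (1): starting from $A_i\rho_{ij}A_i^{-1}$, apply $A_i\rho_{ij}=\rho_{ij}A_j$ to get $\rho_{ij}A_j A_i^{-1}$, and then apply $A_i^{-1}\rho_{ij}=\rho_{ij}A_j^{-1}$ (the inverse form of the second equation in (1), valid since $A$ was arbitrary) reading from the other side, namely $A_j A_i^{-1}$ commutes appropriately... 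Let me instead argue cleanly: from (1), $\rho_{ij}A_j = A_i\rho_{ij}$ and $\rho_{ij}A_i = A_j\rho_{ij}$; taking inverses of the second gives $A_i^{-1}\rho_{ij}^{-1} = \rho_{ij}^{-1}A_j^{-1}$, i.e. $\rho_{ij}^{-1}A_j^{-1} = A_i^{-1}\rho_{ij}^{-1}$. Then
\begin{align*}
A_j^{-1}\rho_{ij}A_j = A_j^{-1}(A_i\rho_{ij}) = A_j^{-1}A_i\rho_{ij},
\end{align*}
while $A_i\rho_{ij}A_i^{-1}$: here I use that $A_i$ and $A_j$ have disjoint supports ($F_i$ and $F_j$ are disjoint), hence commute, so $A_j^{-1}A_i = A_i A_j^{-1}$, and it remains to see $A_i A_j^{-1}\rho_{ij} = A_i\rho_{ij}A_i^{-1}$, i.e. $A_j^{-1}\rho_{ij} = \rho_{ij}A_i^{-1}$, which is the inverse of the first equation in (1). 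That closes the loop.

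The main obstacle, such as it is, is bookkeeping rather than mathematics: one must be scrupulous about the convention that maps compose right-to-left (as fixed in the Preliminaries) and that ``$A_i$ acts as $A$ on $F_i$'' presupposes the fixed identification of $F_i$ with the standard $\Sigma_2^2$ — the same identification under which $\rho_{ij}$ realizes the swap — so that ``$A$ on $F_i$ transported across $\rho_{ij}$ becomes $A$ on $F_j$'' is literally true and not merely true up to conjugation by the transition diffeomorphism. Since Section~\ref{Sec: Genus11} set up exactly this compatibility (the identifications of $F_{ij}$ with $F_1\subset S$ and $F_2\subset S$ agree for $F_i$ and $F_j$), the transported map is $A$ itself with no correction term, and the proof reduces to the short formal manipulation above together with the disjointness of the supports of $A_i$ and $A_j$. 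I would present relation (1) via the piecewise check on $F_i$, $F_j$, and $F\setminus F_{ij}$, then derive relation (2) purely algebraically from (1) and the commutativity $[A_i,A_j]=1$.
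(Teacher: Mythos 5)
Your proof is correct and follows essentially the same route as the paper, whose entire proof is the one-line observation that these identities are ``obvious from the standard conjugation action of mapping class elements''; you have simply spelled out that conjugation/transport argument piecewise on $F_i$, $F_j$ and the complement, and then derived (2) algebraically from (1) together with $[A_i,A_j]=1$. One tiny bookkeeping note: the relation $A_j^{-1}\rho_{ij}=\rho_{ij}A_i^{-1}$ you invoke at the end is not literally ``the inverse of the first equation in (1)'' but rather (1) applied to $A^{-1}$ (equivalently, a rearrangement of the second equation of (1)), which is of course valid since $A$ is arbitrary.
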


\begin{proof} These properties are obvious from the standard conjugation action of mapping class elements. \end{proof}

% ==================================================================================================================
\subsection{Arbitrarily long positive factorizations: simple examples } \label{Sec: Examples} \
% ==================================================================================================================

The goal of this section is to construct the desired monodromies.

\begin{figure}[ht!]
\begin{center}
\includegraphics[width=9cm]{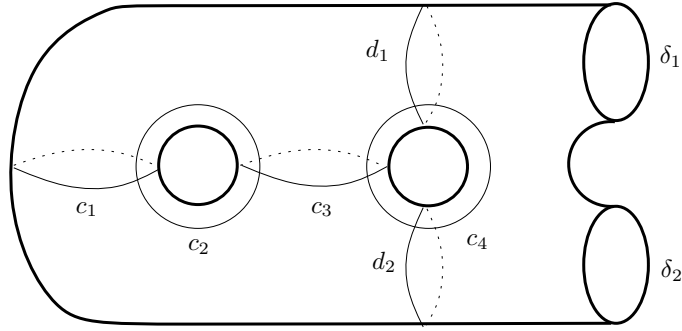}
\caption{The curves of the commutator relation.}
\label{CommutatorRelationCurves}
\end{center}
\end{figure}

\begin{lemma} \label{lemma:commutator}
Let $c_1, c_2, c_3, d_1, d_2$ be the simple closed curves on $\Sigma_2^2$  shown in Figure~\ref{CommutatorRelationCurves}, where $\delta_1, \delta_2$ are the two boundary curves. For any positive integer $m$, the following relation holds in $\Gamma_2^2$:
\[ 1 = T^m \, C(m) \,  , \]
where 
\[ T= t_{c_2} t_{c_3} (t_{c_1} t_{c_2} t_{c_3})^2 t_{c_1} t_{c_2} \] 
is a product of $10$ positive Dehn twists, and 
\[C(m)= [t_{c_1}^{-m} t_{d_1}^{m}, \psi^{-1}] \]  
is a commutator such that $\psi$ is any orientation-preserving self-diffeomorphism of $\Sigma_2^2$ compactly supported in the interior of $\partial \Sigma_2^2$ and mapping the pair $(c_1, d_1)$ to $(d_2, c_3)$. In particular, the above relation does not involve any twists along $\delta_1$ or $\delta_2$. 
\end{lemma}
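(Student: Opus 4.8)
\textbf{Proof strategy for Lemma~\ref{lemma:commutator}.}

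The plan is to derive the relation $1 = T^m C(m)$ from a known ``lantern-type'' or chain relation in $\Gamma_2^2$, exhibited in \cite{BKM}, by a conjugation-and-rearrangement argument. First I would recall the precise relation from \cite{BKM}: on $\Sigma_2^2$ there is an identity expressing a product of boundary-parallel twists (or the identity, after appropriate capping) as a word in positive twists along the chain $c_1, c_2, c_3$ together with a single ``extra'' twist, and the freedom in that relation is parametrized by an integer $m$ recording how many times one inserts a substitution of the form $t_{c_1}^{-1} t_{d_1}$, where $d_1$ is the image of $c_1$ under a suitable rotation-type map of the genus two piece. The key algebraic input is that $t_{c_2} t_{c_3}(t_{c_1}t_{c_2}t_{c_3})^2 t_{c_1}t_{c_2}$, the candidate $T$, is precisely the monodromy word of the genus two Lefschetz fibration underlying that relation (this is the standard $2g+1 = 5$ chain / hyperelliptic relation packaged for two boundary components, so $T^m$ is what one gets by taking the $m$-fold ``twisted fiber sum'' along a section), and that modifying the fiber sum by the element $t_{c_1}^{-m} t_{d_1}^m$ conjugated appropriately introduces exactly the commutator factor $C(m)$.

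The steps, in order, would be: (1) State the base relation ($m=1$ case) from \cite{BKM} and identify $T = t_{c_2}t_{c_3}(t_{c_1}t_{c_2}t_{c_3})^2 t_{c_1}t_{c_2}$ as the relevant positive word of length $10$, checking it is a genuine relation in $\Gamma_2^2$ with no $\delta_i$ twists (this uses that the hyperelliptic/chain relation on a closed-up surface lifts to one with the boundary twists appearing with total exponent zero on $\Sigma_2^2$ — one must be careful here with the boundary-twist bookkeeping). (2) Introduce $\psi$: verify that an orientation-preserving diffeomorphism compactly supported in the interior exists taking $(c_1,d_1)$ to $(d_2,c_3)$ — this is a ``change of coordinates'' argument, using the classification of surfaces / the fact that both pairs $(c_1,d_1)$ and $(d_2,c_3)$ consist of two simple closed curves meeting once (or in a fixed configuration) in the interior, so there is a homeomorphism of $\Sigma_2^2$ rel boundary carrying one to the other. (3) Compute: using $\psi$-conjugation on the base relation and the commutation identities among twists along disjoint curves, show that the product of $m$ suitably conjugated copies of the base relation telescopes into $T^m$ times $[t_{c_1}^{-m}t_{d_1}^m, \psi^{-1}]$. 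Concretely, one writes $T^m C(m) = T^m (t_{c_1}^{-m}t_{d_1}^m)\psi^{-1}(t_{c_1}^{m}t_{d_1}^{-m})\psi$ and then inserts $m$ copies of $1 = T\,(\text{base word})$ to cancel everything; the commutator structure is exactly what records the ``defect'' between $\psi$-conjugating once versus $m$ times. (4) Conclude that since $T$ is a positive word, $C(m)$ must be the stated commutator, and observe directly from the construction that no $t_{\delta_1}, t_{\delta_2}$ appears because $\psi$ is compactly supported in the interior and the base relation already has trivial boundary-twist content.

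The main obstacle I expect is Step (1)/(3): getting the \emph{boundary-twist bookkeeping} exactly right. In $\Gamma_2^2$ the chain/hyperelliptic relation naturally produces powers of $t_{\delta_1} t_{\delta_2}$ (or of individual $t_{\delta_i}$) on one side, and the claim is that in this particular packaging those cancel. Verifying this requires either citing the precise normalization in \cite{BKM} or recomputing the relation carefully — for instance by capping one boundary component, using the known relation in $\Gamma_2^1$, and then lifting back, tracking which way the framing corrections go. The secondary subtlety is making sure the commutator $C(m)$ comes out with the signs and the exponent $m$ in exactly the positions claimed (i.e. $[t_{c_1}^{-m}t_{d_1}^m, \psi^{-1}]$ rather than some conjugate or inverse of it); this is a matter of being disciplined about the right-to-left functional convention declared in the Preliminaries, and about whether $\psi$ sends $(c_1,d_1)$ to $(d_2,c_3)$ or the reverse. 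Once those conventions are pinned down, the telescoping identity is essentially formal.
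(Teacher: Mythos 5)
Your overall strategy --- start from a chain-type relation, raise it to the $m$-th power, and use commutativity of twists along disjoint curves together with $\psi$-conjugation to recognize the result as a single commutator --- is the right one, and your step (3) is essentially the actual computation run backwards. However, you have misidentified the base relation, and this is not a bookkeeping subtlety but the substantive input of the proof. The relation used is the \emph{length-three} chain relation applied to $c_1,c_2,c_3$ inside $\Sigma_2^2$, namely $(t_{c_1}t_{c_2}t_{c_3})^4 = t_{d_1}t_{d_2}$, where $d_1,d_2$ are the two boundary curves of a regular neighborhood of the chain $c_1\cup c_2\cup c_3$ (a genus-one subsurface with two boundary components). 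These are \emph{interior} curves of $\Sigma_2^2$, not the boundary curves $\delta_1,\delta_2$; that is precisely why the statement can assert that no $t_{\delta_i}$ appears. The length-five chain/hyperelliptic relation you invoke would produce $t_{\delta_1}t_{\delta_2}$ on the right and a positive word of length $30$, not the length-$10$ word $T$, so the ``boundary-twist bookkeeping'' you name as the main obstacle is a phantom created by this misidentification. With the correct relation in hand the proof is short: cancelling $t_{c_1}$ on the left and $t_{c_3}$ on the right gives $T = t_{c_1}^{-1}t_{d_1}t_{d_2}t_{c_3}^{-1}$; since $c_1,c_3,d_1,d_2$ are pairwise disjoint, $T^m = t_{c_1}^{-m}t_{d_1}^{m}t_{d_2}^{m}t_{c_3}^{-m}$; and substituting $d_2=\psi(c_1)$, $c_3=\psi(d_1)$ turns $t_{d_2}^{m}t_{c_3}^{-m}$ into $\psi\,(t_{c_1}^{-m}t_{d_1}^{m})^{-1}\psi^{-1}$, exhibiting $T^m$ as the inverse of the commutator $C(m)$. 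No telescoping insertion of $m$ conjugated copies of a base word is needed.

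A second, related slip: you assert that $c_1$ and $d_1$ (and $d_2$, $c_3$) ``meet once.'' In fact $c_1,c_3,d_1,d_2$ are pairwise \emph{disjoint}, and this disjointness is exactly what allows the $m$-th power of the right-hand side to be rearranged into $t_{c_1}^{-m}t_{d_1}^{m}t_{d_2}^{m}t_{c_3}^{-m}$; the existence of $\psi$ taking the disjoint pair $(c_1,d_1)$ to the disjoint pair $(d_2,c_3)$ rel boundary is then a standard change-of-coordinates fact (the Alexander method). Your worry about the exact placement of inverses in $C(m)$ is legitimate but minor; the missing piece is the correct identification of the three-chain relation and of $d_1,d_2$ as its interior boundary curves.
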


The above family of relations is a slight modification of one of the many discovered in \cite{BKM}, which express fixed number of commutators and fixed powers of boundary parallel Dehn twists as arbitrarily long products of positive Dehn twists. In a way, we have chosen the simplest one: these relations consist of only one commutator and are supported in a genus two surface --- realizing the smallest possible numbers (of the power of a boundary parallel Dehn twist, fiber genus and base genus, respectively) for such a relation to hold \cite{BKM}. Moreover, it does not involve any Dehn twists along $\delta_1$ and $\delta_2$, allowing us to extend the mapping classes supported on these pieces in a rather straightforward way. Lastly, the proof of this particular relation is the easiest among all the others in \cite{BKM}. We include the proof below.

\begin{proof}
The standard chain relation on $\Sigma_2^2$ applied to $c_1, c_2, c_3$ gives
\[ \left( t_{c_1}t_{c_2}t_{c_3} \right)^{4}=t_{d_l} t_{d_2} \, .\]
Multiplying both sides with $t_{c_1}^{-1}$ from the left and with $t_{c_3}^{-1}$ from the right we get:
\begin{eqnarray}\label{eqn:T-l}
\left( t_{c_2}  t_{c_{3}} \right) \left( t_{c_1}t_{c_2} t_{c_{3}} \right)^{2} \left( t_{c_1}t_{c_2} \right) = t_{c_1}^{-1} t_{d_1} t_{d_2}  t_{c_{3}}^{-1}.
\end{eqnarray}
Let $T$ denote the left hand side of the above equation, and take the $n$-th power of both sides. 

Since $c_1, c_3, d_1, d_2$ are all pairwise disjoint, Dehn twists along them all commute with each other, allowing us to rewrite the $n$-th power of the right hand side as
\[ t_{c_1}^{-m}  t_{d_1}^{m} t_{d_2}^{m}   t_{c_{3}}^{-m} = t_{c_1}^{-m} t_{d_1}^{m} t_{\psi(c_1)}^{m}   t_{\psi(d_1)}^{m}
= t_{c_1}^{-m} t_{d_1}^{m} \psi( (t_{c_1}^{-m} t_{d_1}^{m})^{-1}) \psi^{-1} = [ \psi^{-1}, t_{c_1}^{-m} t_{d_1}^{m}] , \]
where $\psi$ is any self-diffeomorphism of $\Sigma_2^2$ as described in the statement of the lemma. By the Alexander method, it is easy to see that such $\psi$ exists. 
\end{proof}

We can now prove:

\begin{theorem} \label{thm:minimal example} 
Let $\ds \Phi = \rho_{24} \rho_{13} \rho_{34}\rho_{23}\rho_{12} \in \Gamma_{11}^2$. For each $m \in \N$, $\ds \Phi$ can be written as a product of $10m + 30$ positive Dehn twists. Moreover, for each $l \in \N$ there is an element $\Phi_l \in \Gamma_g^2$, $g=11+4l$, which can be written as a product of $10m + 5(2l+6)$ positive Dehn twists.
\end{theorem}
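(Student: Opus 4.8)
The plan is to build $\Phi$ out of swap maps so that, on one distinguished genus two subsurface, the composition realizes exactly the commutator $C(m)$ from Lemma~\ref{lemma:commutator}, while acting as the identity on everything else; then the relation $1 = T^m\, C(m)$ lets us trade the commutator for $T^m$, a product of $10m$ positive Dehn twists, and the swap maps themselves contribute their own positive factorizations. First I would compute the restriction of $\Phi = \rho_{24}\rho_{13}\rho_{34}\rho_{23}\rho_{12}$ to each $F_i$ using Proposition~\ref{prop:rhoproperties}(1)--(2) and the compatibility of the chain identifications from Section~\ref{Sec: Genus11} (the relations $\rho_{jk}\rho_{ij}|_{F_i} = \rho_{ik}|_{F_i}$ and $\rho_{ij}^2|_{F_i\cup F_j} = \mathrm{id}$). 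Tracking where each $F_i$ goes under the five swaps, one checks that $\Phi$ permutes the four subsurfaces and that, after composing the identifications, $\Phi$ restricted to $F_1\cup\cdots\cup F_4\cup(D_1\cup D_2)$ is (up to boundary multitwists, which are absorbed) the element $C(m)$ supported on a single $F_i$ — this is precisely the point of choosing the specific word $\rho_{24}\rho_{13}\rho_{34}\rho_{23}\rho_{12}$ rather than an arbitrary product. Here one uses Proposition~\ref{prop:rho1}: conjugating a swap by $A_i$ changes the gluing by $A^{-1}$, which is exactly the mechanism that injects the maps $t_{c_1}^{-m}t_{d_1}^m$ and $\psi^{-1}$ into the identifications.

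Once $\Phi$ is identified with $C(m)$ as an element of $\Gamma_{11}^2$, I would invoke Lemma~\ref{lemma:commutator}: since $1 = T^m\, C(m)$ in $\Gamma_2^2$ and the relation involves no twists along $\delta_1,\delta_2$, it holds verbatim inside the embedded subsurface $F_i \hookrightarrow F$, so $C(m) = T^{-m}$ — but we want a \emph{positive} factorization, so instead I would write $\Phi$ directly as the product of the given swap maps $\rho_{24}\rho_{13}\rho_{34}\rho_{23}\rho_{12}$, and separately observe that this \emph{same} element equals $T^m$ composed with the positive parts coming from each $\rho_{ij}$. Concretely: each $\rho_{ij}$ has a positive factorization into $g+1 = 2\cdot 2 + 2 = 6$ positive Dehn twists by Proposition~\ref{prop:pos} applied on the genus $5$ subsurface $F_{ij}$ (here $g' = 2$), so the five swaps give $30$ positive twists; absorbing the commutator via the chain relation replaces it by $T^m$, which is $10m$ positive twists. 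Summing, $\Phi$ is a product of $10m + 30$ positive Dehn twists.

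For the family version, I would replace the genus $11$ surface by $\Sigma_g^2$ with $g = 11 + 4l$, decomposing it into $2l+6$ genus two subsurfaces $F_1,\dots,F_{2l+6}$ glued to two base surfaces (now disks with $2l+6$ holes), exactly paralleling Section~\ref{Sec: Genus11}. The element $\Phi_l$ is built from an analogous product of swap maps $\rho_{ij}$ — one chains together $2l+6$ of them in a pattern generalizing $\rho_{24}\rho_{13}\rho_{34}\rho_{23}\rho_{12}$ so that, on one $F_i$, the composition again realizes $C(m)$ while permuting the remaining subsurfaces trivially overall. Each swap on a genus $5$ subsurface still factors into $6 = 2g'+2$ positive twists, there are $2l+6$ of them, and the commutator again costs $10m$, giving $10m + 5(2l+6)$.

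The main obstacle will be the bookkeeping in the first paragraph: verifying that the \emph{particular} product of five (resp. $2l+6$) swap maps, after composing all the chain-induced identifications of the $F_i$'s, really does collapse to the single commutator $C(m)$ on one subsurface and to the identity on the base surfaces $D_1, D_2$ and the other subsurfaces — i.e., that the "swap calculus" closes up correctly. This requires carefully tracking the permutation of the $F_i$ induced by $\Phi$, checking it is trivial (so that $\Phi$ is genuinely supported on subsurfaces after all the swaps cancel in pairs), and then chasing how the two nontrivial identifications — the one twisted by $t_{c_1}^{-m}t_{d_1}^m$ and the one twisted by $\psi^{-1}$ — combine via Proposition~\ref{prop:rho1}(2) into the bracket $[t_{c_1}^{-m}t_{d_1}^m,\psi^{-1}]$ of Lemma~\ref{lemma:commutator}. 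The extension to $g = 11+4l$ is then routine once the $l=0$ case is fully understood, since the generalized swap pattern is forced by the same cancellation requirements.
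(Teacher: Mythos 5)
Your overall strategy (trade the commutator for $T^m$ via Lemma~\ref{lemma:commutator}, count $6$ positive twists per swap map) is the right one, but the central claim in your first paragraph is false, and it is exactly the point where the real work lies. The element $\Phi = \rho_{24}\rho_{13}\rho_{34}\rho_{23}\rho_{12}$ does \emph{not} permute the subsurfaces trivially: tracking $F_1 \mapsto F_2 \mapsto F_3 \mapsto F_4 \mapsto F_2$, etc., through the five swaps shows it induces the $4$-cycle $(1\,2\,3\,4)$, so $\Phi$ is not supported on the $F_i$'s and the swaps do not ``cancel in pairs.'' More fundamentally, $\Phi$ is a fixed element of $\Gamma_{11}^2$, independent of $m$, so it cannot ``be'' $C(m)$ for every $m$ (indeed $C(m)=T^{-m}$ is a different mapping class for each $m$). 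The correct mechanism is the reverse of what you describe: one does not show that $\Phi$ collapses to the commutator; one shows that the \emph{product} $[A_1,B_1]\,\Phi$ is itself expressible as a product of conjugates of (possibly $A$- or $B$-twisted) swap maps. Concretely, using $A_i\rho_{ij}=\rho_{ij}A_j$ repeatedly one verifies
\[
(B_1^{-1}A_1^{-1}B_1)(B_2^{-1}A_2^{-1})(B_3^{-1})\,(\rho_{24}^{B}\,\rho_{13}^{A}\,\rho_{34}\rho_{23}\rho_{12})\,(B_3)(A_2B_2)(B_1^{-1}A_1B_1) \;=\; [A_1,B_1]\,\Phi ,
\]
and the left-hand side is a positive word of length $30$ (a conjugate of five swap maps, each of which factors into $6$ positive twists). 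Prepending $T_1^m$ and using $T_1^m[A_1,B_1]=T_1^mC(m)_1=1$ then yields $\Phi = T_1^m\,[A_1,B_1]\,\Phi$ as a positive word of length $10m+30$. Your write-up never establishes (or even states) this identity, which is the heart of the proof; ``separately observe that this same element equals $T^m$ composed with the positive parts coming from each $\rho_{ij}$'' is precisely the assertion that needs proof.

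Your treatment of the second statement also diverges from what the count requires. You propose $2l+6$ genus-two subsurfaces and $2l+6$ swap maps of $6$ twists each, which would give $6(2l+6)$, not the claimed $5(2l+6)$; moreover it is not clear that a longer chain of swaps can be arranged to produce a single commutator as its defect. The intended construction keeps exactly four subsurfaces but enlarges each to $\Sigma_{2+l}^2$, so that each of the \emph{five} swap maps now lives on a $\Sigma_{2(2+l)+1}^2$ and factors into $2(2+l)+2 = 2l+6$ positive twists, giving $5(2l+6)$; the commutator relation of Lemma~\ref{lemma:commutator} is carried along by extending $\Sigma_2^2$ with a torus with two holes on which all the maps act trivially.
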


\begin{proof} 
Let us consider the surface $F \cong \Sigma_{11}^2$ with the four subsurfaces $F_i$ as before. Using Proposition~\ref{prop:rho1} repeatedly, we conclude that the following relation holds in $\Gamma_{11}^2$:
\begin{align*} & (B_1^{-1} A_1^{-1} B_1)(B_2^{-1} A_2^{-1})(B_3^{-1}) (\rho_{24}^B \rho_{13}^A \rho_{34}\rho_{23}\rho_{12})(B_3)(A_2 B_2)(B_1^{-1} A_1 B_1) \\ =& (B_1^{-1} A_1^{-1} B_1 A_1) \rho_{24} \rho_{13} \rho_{34}\rho_{23}\rho_{12}  \\ =& [A_1, B_1] \rho_{24} \rho_{13} \rho_{34}\rho_{23}\rho_{12} 
\end{align*}

Now fix a non-negative integer $m$ and let $A := t_{c_1}^{-m} t_{d_1}^{m}$, $B := \psi^{-1}$, and $T$ be as in Lemma~\ref{lemma:commutator}. The following gives a factorization of $\Phi$:

\begin{align*} & T_1^m (B_1^{-1} A_1^{-1} B_1)(B_2^{-1} A_2^{-1})(B_3^{-1}) (\rho_{24}^B \rho_{13}^A \rho_{34}\rho_{23}\rho_{12})(B_3)(A_2 B_2)(B_1^{-1} A_1 B_1) \\ &\; =  T_1^m [A_1, B_1] \rho_{24} \rho_{13} \rho_{34}\rho_{23}\rho_{12} \\ &\; =  T_1^m C(m)_1 \rho_{24} \rho_{13} \rho_{34}\rho_{23}\rho_{12} \\ & \; = \rho_{24} \rho_{13} \rho_{34}\rho_{23}\rho_{12} \\ & \; = \Phi
\end{align*}

The word \[T_1^m (B_1^{-1} A_1^{-1} B_1)(B_2^{-1} A_2^{-1})(B_3^{-1}) (\rho_{24}^B \rho_{13}^A \rho_{34}\rho_{23}\rho_{12})(B_3)(A_2 B_2)(B_1^{-1} A_1 B_1)\] is itself a positive factorization of $\Phi$; $T$ is already a product of positive Dehn twists, whereas each $\rho_{ij}$ has a positive factorization by Proposition~\ref{prop:pos}, and hence so do all its conjugates. 

By Equation~\ref{eqn:rho} and Proposition~\ref{prop:pos}, each instance of $\rho_{ij}$ can be written as a product of $6$ positive Dehn twists. By Lemma~\ref{lemma:commutator}, $T$ is a product of $10$ positive Dehn twists. Thus 
\[T^m (B_1^{-1} A_1^{-1} B_1)(B_2^{-1} A_2^{-1})(B_3^{-1}) (\rho_{24}^B \rho_{13}^A \rho_{34}\rho_{23}\rho_{12})(B_3)(A_2 B_2)(B_1^{-1} A_1 B_1)\]
gives a positive Dehn twist factorization of $\Phi$ of the stated length, concluding our proof of the first statement.

To obtain $\Phi_l \in \Gamma_{11+4l}^2$, we first replace each $F_i$ with a surface $F'_i \cong \Sigma_{2+l}^2$ connecting an inner hole of $D_1$ to that of $D_2$ so as to get a surface $F' \cong \Sigma_{11+4l}^2$ with subsurfaces $F'_i \cong \Sigma_{2+l}^2$, for $i=1, \ldots, 4$. Note that extending $\Sigma_2^2$ by attaching a torus with two holes, we obtain the same family of relations in $\Gamma_{2+l}^2$ as in Lemma~\ref{lemma:commutator} involving the same mapping classes, now understood to be mapping classes acting on $\Sigma_{2+l}^2$ which restrict to identity over the torus. (The chains on these subsurfaces are extended in the obvious way as well.) We can then define the swap maps $\rho'_{ij}$ in a similar fashion, still guaranteeing that while exchanging any pair of subsurfaces $F'_i$ and $F'_j$, they exchange the subsurfaces $F_i$ and $F_j$ as well. Each $\rho'_{ij}$ can be written as a product of $2l + 6$ positive Dehn twists. Hence, we can run the above proof mutatis mutandis to produce the positive factorizations of $\Phi_l$. 
\end{proof}

%===============================================================================================================
\subsection{Swap maps via framed braids} \label{Sec: Framing} \
% ===============================================================================================================

Recall that $B_{*n}$ denotes the framed $n$-stranded braid group, which we think of as the mapping class group of a disk with $n$ holes, each with a marked point on the boundary, where we require the maps and isotopies to preserve the set of interior boundaries and their markings. There is a standard splitting of $B_{*n}$ as $B_n \x Z^n$. We can associate an $n$-vector of integral \textit{framings} to any framed braid, $b$: allow an ambient isotopy of $\D$, fixing $\bdry \D$, sending $b$ back to the identity, which then moves the interior boundary circles back to where they were before the action of $b$. For each boundary circle, we then count the number of clockwise rotations that boundary makes in its movement under this isotopy. One could just as easily count the counterclockwise rotations made by $b$ itself considered as the above movie with time reversed.  In addition to absolute framings as integers, we will often keep track of this information by using arcs connecting each interior boundary circle to the boundary of $\D$; see for instance Figures~\ref{fig:deltabraid} and~\ref{fig:rhobraid}. 

\begin{figure}[ht!] 
\labellist
\pinlabel $F_1$ at 6 137
\pinlabel $a^1$ at 58 82
\pinlabel $c_1^1$ at 18 113
\pinlabel $c_2^1$ at 46 129
\pinlabel $c_3^1$ at 64 105
\pinlabel $c_4^1$ at 81 129
\pinlabel $c_5^1$ at 113 113
\pinlabel $f^1$ at 80 216
\pinlabel $F_2$ at 231 141
\pinlabel $a_{12}$ at 130 60

\pinlabel $a^2$ at 170 82
\pinlabel $c_1^2$ at 130 113
\pinlabel $c_2^2$ at 158 129
\pinlabel $c_3^2$ at 176 105
\pinlabel $c_4^2$ at 193 129
\pinlabel $c_5^2$ at 225 113
\pinlabel $f^2$ at 192 221

\endlabellist
\includegraphics[width = 3in]{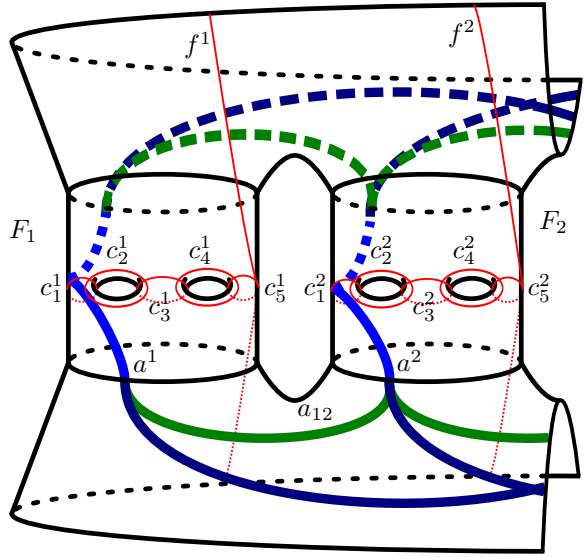} 
\caption{Half of the genus $11$ surface $F$; the genus $5$ subsurface $F_{12}$ and subsurfaces $F_1$ and $F_2$ with their chains and framing arcs. \label{fig:genus11-half}}
\end{figure}

Once again, consider the genus $11$ surface $F$ and its genus $2$ subsurfaces $F_1, \dots, F_4$ as in Figure \ref{fig:genus11}, and the four genus $5$ subsurfaces $F_{ij}$, $1 \leq i < j \leq 4$ obtained by taking the union of $F_i$ and $F_j$ and strips connecting the two along the arcs indicated in Figure~\ref{fig:genus11}. 

Each $F_{ij}$ is determined by a pair of holes and a proper arc $a_{ij}$ connecting them in each $D_i$. To make subsequent descriptions simpler, we choose our arcs $a_{ij}$ to be symmetric under an orientation preserving identification of $D_1$ with $D_2$ which preserves the interior boundary identifications. (As drawn, this symmetry is induced by the hyperelliptic involution evident in Figure~\ref{fig:genus11}.)

As in Section~\ref{Sec: Genus11} we use a collection of arcs and circles to keep track of the surfaces $F_i$ and the actions of the swap and Garside maps on them. In addition to the previous collection, we choose a second arc $f^i$ in $F^i$ which connect the two boundary components through the other end link of the chain, $c_5^i$. We use $f^i$ to keep track of framing information and often write $f^i$ for both the arc in $F_i$ and its extension to a properly embedded arc in $F$. These arcs and circles are shown in Figure~\ref{fig:genus11-half} for the subsurface $F_{12}$. As before, we chose these arcs by considering $F_i$ as the double branched cover of $\D$, and then lifting the arcs that lay on the same axis as the branched points in $\D$ along with the arcs connecting marked points $p_1$ and $p_n$ to the edge of the disk as in Figure~\ref{fig:garside map}. The collection of all circles $c^i_k$, $i =1, \dots , 4$, $k = 1, \dots, 5$, and framing arcs $f_i$, $i=1, \dots, 4$ give an arc and circle decomposition of the surface $F$ ($F$ cut along this union is a pair of disks) and so any diffeomorphism of $F$ is determined by how it acts on this collection. 

In order to completely control the behavior of the swap and Garside maps, we choose $\bdel_{ij}$ and $\rho_{ij}$ so that they exchange both the connecting and framing arcs $a^i$ and $f^i$ with $a^j$ and $f^j$ in addition to exchanging the circles $c_k^*$. % send the entire collection $a^i$, $c^i_k$, $k =1, \dots, 5$ and $f^i$ to $a^j$, $c^j_k$, $k =1, \dots, 5$ and $f^j$. 
This characterizes how each $\bdel_{ij}$ and $\rho_{ij}$ act on the union of subsurfaces $F_1 \cup \cdots \cup F_4$. To specify the maps, then, we need only to determine how they act on the base surfaces $D_1$ and $D_2$, preserving the boundaries and fixing them pointwise up to permutation. Any such map is equivalent to a framed $4$-stranded braid. We give explicit descriptions of these braids in Figures~\ref{fig:deltabraid} and ~\ref{fig:rhobraid}.

We sum up this discussion in the next lemma, which one should perhaps treat as something closer to a mantra:

\begin{lemma} 
Any composition of the maps $\bdel_{ij}$ and $\rho_{ij}$ are determined by their restriction to $D_1$; that is, they are determined by their induced framed $4$-stranded braid.
\end{lemma}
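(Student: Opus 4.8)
The plan is to show that the data of how a composition of swap and Garside maps acts on all of $F$ is recovered from the induced framed $4$-braid on $D_1$ alone. The principle behind this is the arc-and-circle decomposition set up just before the lemma: the collection $\{c^i_k : i=1,\dots,4,\ k=1,\dots,5\}$ together with the connecting arcs $a^i$ and framing arcs $f^i$ cuts $F$ into two disks, so by the Alexander method any mapping class of $F$ is determined by its action (up to isotopy) on this collection. Hence it suffices to prove that the induced framed $4$-braid on $D_1$ determines this action.

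First I would recall that each $\bdel_{ij}$ and each $\rho_{ij}$ has been \emph{chosen} (this is the extra normalization made just above the lemma) so that it carries the triple $(c^i_k, a^i, f^i)$ to $(c^j_k, a^j, f^j)$ and vice versa, and is the identity on every $F_m$ with $m\notin\{i,j\}$; in other words its restriction to $F_1\cup\cdots\cup F_4$ is completely pinned down. Therefore a composition $\Psi=\phi_{(1)}\phi_{(2)}\cdots\phi_{(k)}$, with each $\phi_{(s)}\in\{\bdel_{ij},\rho_{ij}\}$, acts on $F_1\cup\cdots\cup F_4$ by the corresponding composition of the underlying permutations-with-framing of the index set $\{1,2,3,4\}$: the image of each chain $c^i_\bullet$, arc $a^i$, arc $f^i$ is the chain/arcs on $F_{\sigma(i)}$, where $\sigma$ is the underlying permutation of the composite braid, reattached to the holes of $D_1$ and $D_2$ with the twisting prescribed by the framings. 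So the action of $\Psi$ on the $F_i$ pieces is literally read off from the framed $4$-braid.

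Next I would observe the compatibility built in at the start of Section~\ref{Sec: Framing}: the arcs $a_{ij}$, and hence the whole set-up, are symmetric under a fixed orientation-preserving identification of $D_1$ with $D_2$ respecting the boundary-hole identifications (the hyperelliptic symmetry of Figure~\ref{fig:genus11}). Under this symmetry each $\bdel_{ij}$ and $\rho_{ij}$ induces the \emph{same} framed $4$-braid on $D_2$ as on $D_1$ — this is again immediate from the geometric descriptions, since the maps are defined symmetrically on the two halves. Consequently the restriction of any composition $\Psi$ to $D_2$ is the image, under this identification, of its restriction to $D_1$; the $D_2$-braid carries no independent information. Combining the two observations: the restriction to $D_1\cup D_2$ and the action on $F_1\cup\cdots\cup F_4$ are all determined by the single framed $4$-braid on $D_1$, and these together determine the action on the full arc-and-circle system, hence determine $\Psi\in\Gamma_{11}^2$ by the Alexander method. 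The higher-genus variants $\bdel'_{ij},\rho'_{ij}$ of Theorem~\ref{thm:minimal example} are handled identically, since the chains and framing arcs were extended in the obvious way and the maps restrict to the identity on the extra handles.

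The step I expect to be the only real friction point is making the symmetry claim airtight, i.e.\ that each $\bdel_{ij}$ and $\rho_{ij}$ genuinely induces matching framed braids on $D_1$ and $D_2$ rather than braids differing by a global twist or orientation reversal. The subtlety is the orientation issue already flagged in the discussion of Figure~\ref{fig:bdel lifted}: viewed from above versus below, the two boundary rotations of the Garside lift go in opposite directions, so one must be careful that the chosen identification $D_1\cong D_2$ is the orientation-reversing one that absorbs this discrepancy, so that the induced braids coincide. Once the correct identification is fixed — which is exactly the hyperelliptic involution of Figure~\ref{fig:genus11}, as the authors note — this is a direct unwinding of the explicit braid pictures in Figures~\ref{fig:deltabraid} and~\ref{fig:rhobraid}, and everything else is bookkeeping via the Alexander method.
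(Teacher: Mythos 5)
Your argument is correct and is essentially the paper's own proof: both rest on the normalization of $\bdel_{ij}$ and $\rho_{ij}$ on the subsurfaces $F_1\cup\cdots\cup F_4$ via the chains and framing arcs, the observation that the maps act identically on $D_2$ as on $D_1$, and the fact that the arc-and-circle system cuts $F$ into disks so that the induced framed $4$-braid pins down the mapping class. Your extra care about the orientation of the $D_1\cong D_2$ identification is a fair elaboration of a point the paper states without comment, but it does not change the route.
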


\begin{proof} We characterized the maps $\bdel_{ij}$ and $\rho_{ij}$ on $F_1 \cup F_2 \cup F_3 \cup F_4$ by requiring them to exchange the arc and circle chains of $F_i$ and $F_j$ and to fix $F_k$ for $k \neq i, j$ pointwise. Since $\bdel_{ij}$ and $\rho_{ij}$ act the same on $D_2$ as on $D_1$, we can then focus solely on $D_1$.  This allows us to consider the rest of the map acting on $D_1$, possibly exchanging boundary circles. Since we require the maps reserve the framing arcs $f_i$ in $F_i$, they also preserve the markings on the interior boundaries in $D^1$, which are also the boundaries of $F_i$. Thus on the complement of $F_1, \dots, F_4$, $\bdel_{ij}$ and $\rho_{ij}$ act as framed $4$-braids.
\end{proof} 

With this lemma in hand, we can give visual descriptions of the maps $\bdel_{ij}$ and $\rho_{ij}$ as acting on $D_1$. Figures~\ref{fig:deltabraid} and $\ref{fig:rhobraid}$ describe the maps $\bdel_{ij}$ and $\rho_{ij}$ respectively by their action on $D_1$.

\begin{figure}[ht!]%
\includegraphics[width=4in]{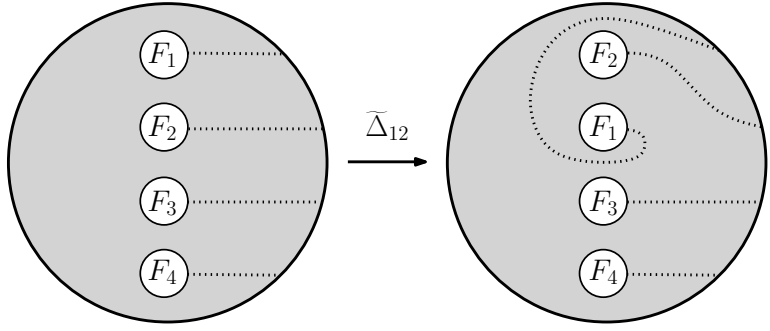}
\caption{The framed braid half-twist corresponding to $\bdel_{12}$ on $D_1$.}%
\label{fig:deltabraid}%
\end{figure}

\begin{figure}[ht!]%
\includegraphics[width=4in]{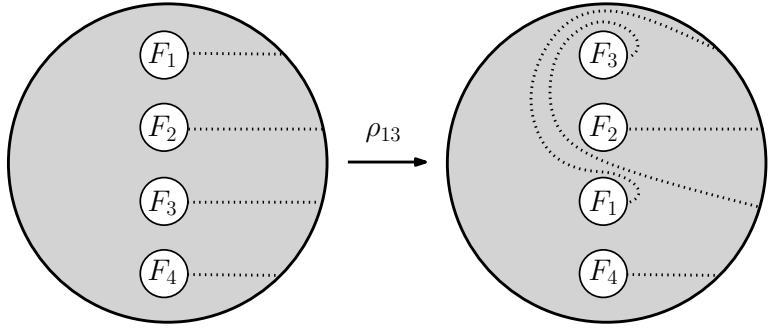}
\caption{The framed braid half-twist corresponding to $\rho_{13}$ on $D_1$. Note, this differs from the Garside half-twist $\bdel_{13}$ by \textit{negative} Dehn twists about the boundaries of $F_1$ and $F_3$ in $D_1$ --- this explains the framing difference between the two maps $\bdel_{ij}$ and $\rho_{ij}$.  }%
\label{fig:rhobraid}%
\end{figure}

Next proposition collects various properties of $\rho_{ij}$ acquired from well-known properties of braids, along with properties in Section~\ref{Sec: Genus11} regarding their interactions with self-diffeomorphisms of subsurfaces, amounting to a simple yet extremely helpful \emph{calculus} of swap maps:

\begin{proposition}[Properties of swap maps] \label{lm:rhoasbraid} 
For any $\ds A \in \Gamma_2^2$ the maps $\rho_{ij}$ satisfy the following relations in $\Gamma_{11}^2$
\begin{enumerate}
\item $\rho_{12}\rho_{23}\rho_{12} = \rho_{23}\rho_{12}\rho_{23}$
\item $\rho_{23}\rho_{34}\rho_{23} = \rho_{34}\rho_{23}\rho_{34}$
\item $\rho_{13} = \rho_{12}^{-1} \rho_{23} \rho_{12} =  \rho_{23} \rho_{12} \rho_{23}^{-1}$
\item $\rho_{24} = \rho_{23}^{-1} \rho_{34} \rho_{23} =  \rho_{34} \rho_{23} \rho_{34}^{-1}$
\item $\rho_{ij}\rho_{kl} = \rho_{kl}\rho_{ij}$ if $\{i, j \} \cap \{ k, l \} = \emptyset$ 
\item $A_i \rho_{ij} = \rho_{ij} A_j$, $A_j \rho_{ij} = \rho_{ij} A_i$
\item $A_i \rho_{ij} A_i^{-1} = \rho_{ij}^A = A_j^{-1} \rho_{ij} A_j $
\end{enumerate}
\end{proposition}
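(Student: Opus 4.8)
The plan is to appeal to the lemma stated just before the proposition: any composition of the maps $\rho_{ij}^{\pm 1}$ is completely determined by the framed $4$-stranded braid it induces on $D_1$. Hence it suffices to verify each of (1)--(5) as an identity of framed $4$-braids on $D_1$, the equality then propagating to all of $\Gamma_{11}^2$; relations (6) and (7) are exactly the content of Proposition~\ref{prop:rho1} and require nothing further.

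First I would dispose of the unframed content. Forgetting framings, $\rho_{12},\rho_{23},\rho_{34}$ are the three standard half-twist generators of $B_4$ --- half-twists along the arcs joining consecutive holes of $D_1$, read off from Figures~\ref{fig:deltabraid} and~\ref{fig:rhobraid} --- so the unframed parts of (1) and (2) are precisely the braid relation between adjacent Artin generators. For (3), (4) and the commutations in (5), the unframed parts are the Birman--Ko--Lee band-generator relations in $B_4$: after forgetting framings, $\rho_{23}\rho_{12}\rho_{23}^{-1}$ (equivalently $\rho_{12}^{-1}\rho_{23}\rho_{12}$, by the braid relation) is the band generator along the arc $a_{13}$, which is the unframed part of $\rho_{13}$, and similarly $\rho_{24}$ is the band generator along $a_{24}$. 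For (5) one may argue even more directly: the arcs $a_{ij}$ and $a_{kl}$ with $\{i,j\}\cap\{k,l\}=\emptyset$ can be chosen disjoint in $D_1$ and in $D_2$, so the genus $5$ subsurfaces $F_{ij}$ and $F_{kl}$ are disjoint (the $F_1,\dots,F_4$ themselves being disjoint), and then $\rho_{ij}$ and $\rho_{kl}$ have disjoint support.

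It then remains to match the $\ZZ^4$-valued framings of the two sides of each relation. For this I would use that $\rho_{ij}$ differs from the Garside half-twist $\bdel_{ij}$ only by Dehn twists about the boundary-parallel curves $\partial F_i$ and $\partial F_j$ in $D_1$ (the caption of Figure~\ref{fig:rhobraid}), that is, only in the $\ZZ^4$-factor of $B_{*4}\cong B_4\x\ZZ^4$; since the $B_4$-action on that factor is by signed permutation of coordinates while framings add, the framing vector of each side of (1)--(5) is a short bookkeeping computation from the data in Figures~\ref{fig:deltabraid} and~\ref{fig:rhobraid}, and the two come out equal. I expect this framing bookkeeping to be the only genuine obstacle: one must track the $\ZZ^4$-cocycle carefully through the conjugations appearing in (3) and (4) and make sure the corrections distinguishing $\rho_{ij}$ from $\bdel_{ij}$ sit on the intended strands --- which is exactly why the explicit framed braid diagrams of Figures~\ref{fig:deltabraid} and~\ref{fig:rhobraid} were recorded, and why these relations are stated for the normalized maps $\rho_{ij}$ rather than for the $\bdel_{ij}$.
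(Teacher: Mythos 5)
Your proposal is correct and follows the same route as the paper: relations (6)--(7) are restatements of Proposition~\ref{prop:rho1}, and (1)--(5) are reduced, via the preceding lemma, to identities of framed $4$-braids on $D_1$, which the paper likewise disposes of by noting that both sides induce the same framed braid. You simply carry out more explicitly the braid-relation/band-generator identifications and the framing bookkeeping that the paper leaves implicit (and which does check out, e.g.\ each strand of both sides of (1) acquires the same net framing); the only nit is that the $B_4$-action on the $\ZZ^4$ factor is by plain, not signed, permutation of coordinates.
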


\begin{proof} 
For any one of the first five relations, observe that each diffeomorphism acts the same on the collection of parametrized subsurfaces $F_1, F_2, F_3, F_4$ (in fact with the same induced framed braid on the base disk $D_1$), so all follow from the properties of the associated $4$-stranded braid on $D_1$.
\end{proof}

%===============================================================================================================

\subsection{Completing the proof of Theorem~\ref{mainthm}} \label{Sec: Extending} \
% ===============================================================================================================

We will now prove our main theorem, by extending our result in Theorem~\ref{thm:minimal example}. 

\begin{lemma} \label{lm:braidfulltwist} 
The maps $\rho_{ij}$ satisfy the following relations in $\Gamma_{11}^2$:
\[ (\rho_{34}\rho_{23}\rho_{12})^4 = M_\bdry M_4^{-4}M_3^{-4}M_2^{-4}M_1^{-4}\]
\end{lemma}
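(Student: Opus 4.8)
The idea is to compute $(\rho_{34}\rho_{23}\rho_{12})^4$ by passing entirely to the base disk $D_1$, where the $\rho_{ij}$ become framed $4$-stranded braids as described in Section~\ref{Sec: Framing}, Figure~\ref{fig:rhobraid}. By the ``mantra'' lemma, any composition of the $\rho_{ij}$ is determined by its induced framed braid on $D_1$ together with the way it exchanges the parametrized subsurfaces $F_1,\dots,F_4$; and since $\rho_{34}\rho_{23}\rho_{12}$ permutes the four holes cyclically, its fourth power induces the trivial permutation, hence acts on $F_1\cup\cdots\cup F_4$ trivially (each subsurface returns to itself with chains and framing arcs preserved). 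So the whole content is the identity of framed braids:
\begin{equation*}
(\rho_{34}\rho_{23}\rho_{12})^4 \;\simeq\; (\text{full framed twist on } D_1)\cdot(\text{corrections}),
\end{equation*}
and we must match it to $M_\bdry M_4^{-4}M_3^{-4}M_2^{-4}M_1^{-4}$.

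First I would recall that $\Delta_{34}\Delta_{23}\Delta_{12}$, being the lift of the product of the three standard half-twists $b_3 b_2 b_1$ on a $4$-punctured disk, has fourth power equal to the full twist: $(b_3b_2b_1)^4$ is the generator of the center of $B_4$, i.e. a full $2\pi$ rotation of $D_1$ dragging all four holes around. Lifted to $F$, this full twist is a product of boundary Dehn twists: it contributes $t_{\partial F}$ (the outer boundary, which on each of the two boundary components of $F$ gives the multitwist $M_\bdry$) together with one full twist about each of the four inner boundary circles, i.e. $M_1 M_2 M_3 M_4$ — but I must be careful about signs and about which way the inner twists point. This is exactly the computation that in Section~\ref{Sec: Garside} gave $\bdel^2 \simeq M_\bdry$ in the $2$-hole model, now carried out for the $4$-hole braid $(b_3b_2b_1)^4$ and the chain relation.

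Next, the passage from $\bdel_{ij}$ to $\rho_{ij}$ introduces, by definition (Equation~\ref{eqn:rho}) and as noted in the caption of Figure~\ref{fig:rhobraid}, a negative Dehn multitwist about the two boundaries of $F_i$ and of $F_j$ each time $\rho_{ij}$ appears. In the product $(\rho_{34}\rho_{23}\rho_{12})^4$ each of the three transpositions $\{3,4\},\{2,3\},\{1,2\}$ occurs four times, so index $1$ is hit $4$ times (only by $\rho_{12}$), index $4$ is hit $4$ times (only by $\rho_{34}$), and indices $2,3$ are hit $8$ times ($4$ from $\rho_{23}$ plus $4$ from the adjacent transposition). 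That over-counts, so the bookkeeping must instead be done at the level of the braid on $D_1$: replacing each $\bdel_{ij}$-half-twist by the $\rho_{ij}$-half-twist multiplies the braid by $T_i^{-1}T_j^{-1}$ where $T_i$ is the boundary-parallel twist about the $i$-th hole in $D_1$; collecting these over the whole word and using that the $T_i$ are central-enough (they are boundary twists, commuting with everything and with each other) gives a clean total correction. Combined with the fact that the inner-boundary twists coming from the full twist and those coming from the $\rho$-vs-$\Delta$ discrepancy must add up — the former contributing $+1$ per hole per full loop and the latter $-$ contributions — one lands on the total exponent $-4$ on each $M_i$. I would organize this as: (a) $(\Delta_{34}\Delta_{23}\Delta_{12})^4 \simeq$ outer full twist $\cdot\, \Delta_1^{\,2}\Delta_2^{\,2}\Delta_3^{\,2}\Delta_4^{\,2}$-type lift, giving $M_\bdry \cdot M_1 M_2 M_3 M_4$ with the correct orientations traced from Figure~\ref{fig:bdel lifted}; (b) record the $T_i^{-1}T_j^{-1}$ correction for each $\rho$ and its lift $M_i^{-1}M_j^{-1}$; (c) tally exponents: $M_\bdry$ once, and for each $i$, $(+1)$ from step (a) minus the number of times $T_i$ is subtracted from step (b), which after the cyclic-word count gives $1-5=-4$ — i.e. $M_i^{-4}$.

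The main obstacle I expect is the sign/orientation bookkeeping in step (a)–(c): the two boundary components of $F$ receive twists in opposite senses (the hyperelliptic-type symmetry flips orientations, as emphasized already for $\bdel$ in Figure~\ref{fig:bdel lifted}), and the framing arcs $f^i$ must be checked to return to themselves so that no extra half-twist sneaks in. A cleaner route, which I would use to double-check, is to exploit the braid relations already proved in Proposition~\ref{lm:rhoasbraid}(1)–(5): these say the $\rho_{ij}$ satisfy exactly the defining relations of $B_4$ up to the boundary-twist factors, so $(\rho_{34}\rho_{23}\rho_{12})^4$ equals a central element of that modified braid group, and one only needs to evaluate that central element, which is forced to be a product of boundary multitwists; its exponents are then pinned down by abelianizing (counting the total half-twist exponent $12$ in the $B_4$-part, which forces the outer twist, and the framing vector, which forces the $M_i$-powers). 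That reduces the whole lemma to an exponent-counting argument and isolates the only genuinely geometric input — the single normalization $\bdel^2\simeq M_\bdry$ from Section~\ref{Sec: Garside} — which is already established.
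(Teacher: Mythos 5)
Your overall strategy --- reduce everything to the induced framed $4$-braid on $D_1$, identify the $B_4$-part of $(\rho_{34}\rho_{23}\rho_{12})^4$ with the full twist, and then account for the framing discrepancies between that full twist, the outer multitwist $M_\bdry$, and the $M_i^{-1}M_j^{-1}$ corrections hidden in each $\rho_{ij}$ --- is exactly the paper's proof. But your bookkeeping contains two concrete errors that happen to cancel. First, the hole-parallel twists do \emph{not} ``commute with everything'': pushing $M_i$ past a swap permutes its index (Proposition~\ref{prop:rho1}), so the corrections must be tallied per \emph{physical strand}, not per position label. Following a strand through $(\rho_{34}\rho_{23}\rho_{12})^4$ shows each hole participates in exactly $6$ swaps (each pair of strands crosses twice in a full twist, and there are three other strands), so each $M_i$ acquires a correction of $-6$ --- not $-5$, and not the position counts $4,8,8,4$ you wrote down first. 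Second, your step (a) is off by one: the twelve half-twists in $(\bdel_{34}\bdel_{23}\bdel_{12})^4$ contribute total framing $+12$, of which the outer full twist accounts for $+1$ per hole, leaving an excess of $+2$ per hole; that is, $(\bdel_{34}\bdel_{23}\bdel_{12})^4 = M_\bdry M_4^{2}M_3^{2}M_2^{2}M_1^{2}$, not $M_\bdry M_4M_3M_2M_1$. (The paper sees this by noting each strand is the ``top'' circle of a $\bdel$-exchange exactly three times, hence carries framing $+3$.) The correct tally is $(+1)+(+2)+(-6)=-3$ framing on each inner boundary, i.e.\ $M_\bdry M_i^{-4}$; your $1-5=-4$ lands on the right exponent only because the two mistakes compensate.

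Your proposed double-check via Proposition~\ref{lm:rhoasbraid} and abelianization does not sidestep this: centrality of the full twist in $B_4$ does not by itself force $(\rho_{34}\rho_{23}\rho_{12})^4$ to be a product of boundary multitwists in $\Gamma_{11}^2$ --- that identification, and the exponents, are read off from the action on the arc-and-circle system, which is precisely the framing vector you would still have to compute strand by strand. So the lemma really does reduce to the two counts above, and both need to be corrected before the argument is sound.
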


\begin{proof} 
Recalling that $\rho_{ij} = \bdel_{ij} M_j^{-1} M_i^{-1}$, we will proceed by checking the corresponding factorization for $\bdel_{ij}$. Given Proposition~\ref{lm:rhoasbraid}, it is easy to see that $(\bdel_{34}\bdel_{23}\bdel_{12})^4$ acts as a positive full twist on $D_1$. (This is just the usual factorization of a full twist on $4$-strands into the standard braid generators.) What remains is to determine how it acts on the framings of the inner boundary components.
 
To check the framings on the boundary components of $D_1$, we follow through the product as in Figures~\ref{fig:deltabraid} and \ref{fig:rhobraid}. Let us denote the marked inner boundary component of $D_1$ connected to $F_i$ by $\partial_i$, for $i=1, \ldots, 4$. As demonstrated in Figure~\ref{fig:deltabraid}, $\bdel_{ij}$ exchanges $\partial_i$ and $\partial_j$ followed by a full right-handed twist to $\partial_j$, where $i<j$. Thus $\bdel_{ij}$ sends $\partial_j$ to $\partial_i$ with framing $0$ and sends $\partial_i$ to $\partial_j$ with framing $+1$. In the composition $(\bdel_{34}\bdel_{23}\bdel_{12})^4$, each ``strand'' $\partial_i$ shows up as the top circle involved in a $\bdel$ map exchange exactly three times and as the bottom circle exactly three times as well. Thus $(\bdel_{34}\bdel_{23}\bdel_{12})^4$ acts as a full positive twist (along the outer boundary) and with framing $+3$ on each inner boundary component.

Now the boundary twist $M_\bdry$ acts by a full twist but with framing $+1$ on each inner boundary component. So 
\[(\bdel_{34}\bdel_{23}\bdel_{12})^4 = M_\bdry M_4^2 M_3^2 M_2^2 M_1^2 \]
in $\Gamma_{11}^2$, once again under a fixed identification $F \cong \Sigma_{11}^2$.

Since $\rho_{ij} = \bdel_{ij} M_j^{-1} M_i^{-1}$, we conclude that $(\rho_{34}\rho_{23}\rho_{12})^4$ also acts as a full braid twist but with framing $-3$ on each inner boundary component instead --- recall that each strand shows up six times in total. So in $\Gamma_{11}^2$ we have
\[(\rho_{34}\rho_{23}\rho_{12})^4 = M_\bdry M_4^{-4} M_3^{-4} M_2^{-4} M_1^{-4} .\]
\end{proof}

We also note the following simple fact:

\begin{lemma}[Inserting equals appending] \label{lm:insert}
Let $\Phi$ be any mapping class, and $W$ be a word in positive Dehn twists, so that $\Phi = W$ describes a positive factorization of $\Phi$. If $\Phi'$ is a mapping class with $\Phi'= W'$, where $W'$ is a word obtained by inserting positive Dehn twists into $W$, then there is a positive word $\tilde{W}$ so that appending $\tilde{W}$ to $W$ gives $W'$. That is, we can also write:
 $$W' = \tilde{W} W$$ 
\end{lemma}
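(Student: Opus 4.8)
The plan is to exploit the elementary fact that conjugation turns insertions into appendices, using the group structure of the mapping class group.

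First I would set up notation carefully. Write $W = t_{b_1} t_{b_2} \cdots t_{b_k}$, so that $\Phi = t_{b_1} \cdots t_{b_k}$ in $\Gamma_{11}^2$ (or in whichever mapping class group is relevant — the statement is purely group-theoretic). The word $W'$ is obtained from $W$ by inserting some positive Dehn twists; that is, $W' = U_0 \, t_{b_1} \, U_1 \, t_{b_2} \, U_2 \cdots t_{b_k} \, U_k$, where each $U_j$ is a (possibly empty) word in positive Dehn twists. The claim is that $W' = \tilde W \, W$ for some positive word $\tilde W$.

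The key step is the observation that each inserted block can be slid to the left past the $t_{b_i}$'s that precede it, at the cost of conjugating it. Concretely, given a positive word $U_j$ sitting to the right of $t_{b_1} \cdots t_{b_j}$, we may replace it by its conjugate $U_j' := (t_{b_1} \cdots t_{b_j}) \, U_j \, (t_{b_1} \cdots t_{b_j})^{-1}$ placed at the very front, since $(t_{b_1} \cdots t_{b_j}) U_j = U_j' (t_{b_1} \cdots t_{b_j})$. The crucial point — and the only thing one really has to check — is that $U_j'$ is again a positive word: if $U_j = t_{e_1} \cdots t_{e_r}$ then $U_j' = t_{\phi_j(e_1)} \cdots t_{\phi_j(e_r)}$ where $\phi_j = t_{b_1} \cdots t_{b_j}$, using the standard identity $g \, t_c \, g^{-1} = t_{g(c)}$ for a mapping class $g$ and a simple closed curve $c$. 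Since $\phi_j(e_s)$ is again an (isotopy class of) embedded simple closed curve, $U_j'$ is a product of positive Dehn twists. Doing this for every block $U_j$, $j = 0, 1, \dots, k$, from left to right (or all at once), we collect all the inserted twists, suitably conjugated, into a single positive prefix $\tilde W := U_0 \, U_1' \, U_2' \cdots U_k'$, and what remains to the right is exactly $t_{b_1} t_{b_2} \cdots t_{b_k} = W$. Hence $W' = \tilde W \, W$, and $\Phi' = \tilde W \, \Phi$.

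I do not expect any genuine obstacle here; the lemma is a bookkeeping device. The only subtlety worth a sentence in the write-up is making explicit that ``inserting positive Dehn twists into $W$'' is interpreted at the level of words (so that the positions of the $t_{b_i}$ are remembered), which is the natural reading in the monodromy-factorization context where $W$ literally is an ordered list of vanishing cycles; with that understood, the conjugation argument above is immediate and complete.
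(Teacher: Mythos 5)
Your proof is correct and follows essentially the same route as the paper's: the paper conjugates a single inserted letter $w$ past the prefix $W_2$ via $W_2\,w\,W_1 = (W_2 w W_2^{-1})(W_2 W_1)$ and implicitly iterates, while you carry out the same conjugation for all inserted blocks at once, correctly noting that $g\,t_c\,g^{-1}=t_{g(c)}$ keeps the prefix positive. No gaps.
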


\begin{proof} Suppose we split $W$ as $W_2 \cdot W_1$, a product of two words and that we insert a letter $w$ in between. Then $W_2 w W_1 = (W_2 w W_2^{-1})(W_2 \cdot W_1) =  \tilde{w} W$ in the mapping class group, where $\tilde{w}$ is again a positive Dehn twist.
\end{proof}

We can now complete the proof of our main theorem. 

\begin{proof}[Proof of Theorem \ref{mainthm}]
By Theorem~\ref{thm:minimal example} we know there are arbitrarily long factorizations of the map $\Phi$. To prove Theorem~\ref{mainthm}, we extend these factorizations to a factorization of the single positive Dehn multitwist, $M_{\bdry}$, about the boundary of the genus $11$ surface $F$. 

Recall that $\ds \Phi = \rho_{24} \rho_{13} \rho_{34}\rho_{23}\rho_{12}$, and given any $m$ there is a factorization of $\Phi$: 
\[ \ds \Phi = T_1^m (B_1^{-1} A_1^{-1} B_1)(B_2^{-1} A_2^{-1})(B_3^{-1}) (\rho_{24}^B \rho_{13}^A \rho_{34}\rho_{23}\rho_{12})(B_3)(A_2 B_2)(B_1^{-1} A_1 B_1) ,\]
where the right hand side can be expressed as a product of $10m+30$ positive Dehn twists. Here $A$, $B$, $T$ are mapping classes in $\Gamma_2^2$ coming from Lemma~\ref{lemma:commutator}. From Proposition~\ref{lm:rhoasbraid} we have $\rho_{13} = \rho_{12}^{-1} \rho_{23} \rho_{12}$ and $\rho_{24} = \rho_{23}^{-1} \rho_{34} \rho_{23}$. So 
\[ \Phi = \rho_{23}^{-1} \rho_{34} \rho_{23}\rho_{12}^{-1} \rho_{23} \rho_{12} \rho_{34}\rho_{23}\rho_{12}\]

Now we can insert copies of $\rho_{ij}$ into this factorization to get the word 
\[ (\rho_{34} \rho_{23} \rho_{12} \rho_{23}) \rho_{23}^{-1} \rho_{34} \rho_{23} (\rho_{12}^2) \rho_{12}^{-1} (\rho_{34}) \rho_{23} \rho_{12} \rho_{34}\rho_{23}\rho_{12}  = (\rho_{34}\rho_{23}\rho_{12})^4 . \]
Thus by Lemma~\ref{lm:insert}, there is some positive word $W$ so that 
\[M_\bdry M_4^{-4} M_3^{-4}M_2^{-4}M_1^{-4} = W \Phi\]
and hence:
\[M_\bdry = W \Phi M_4^{4} M_3^{4}M_2^{4}M_1^{4} \]
By replacing $\Phi$ by the aforementioned positive factorizations of length $10m+60$ we produce arbitrarily long positive factorizations of $M_\bdry$. 

Now, a similar argument shows how to extend these factorizations to the boundary twist on any higher genus surface. Let $F'$ be a genus $g$ surface ($g>11$) with two boundary components and let $F$ be a non-separating genus $11$ subsurface with two boundary components. Then there is a length $2g+1$ chain for $F'$ which contains a chain for $F$ as a subchain. In fact, we can assume the chain for $F$ consists of standard circles $c_1, \ldots, c_{23}$. Letting $t_i$ be the positive Dehn twist about $c_i$, the standard chain relations on $F$ and $F'$ are $M_\bdry = (t_1, \dots, t_{23})^{24}$ and $M'_\bdry = (t_1, \dots, t_{2g+1})^{2g+2}$, where $M_\bdry$ and $M'_\bdry$ are the boundary multitwists of $F$ and $F'$, resp. We can insert instances of $t_i$ into the word $(t_1, \dots, t_{23})^{24}$ to get $(t_1, \dots, t_{2g+1})^{2g+2}$ and so again by Lemma~\ref{lm:insert}, there is some positive word $W$ so that $M'_\bdry  = W M_\bdry$. By replacing $M_\bdry$ with the previous factorizations, we get factorizations of $M_\bdry'$ of unbounded length. 

\end{proof}

\begin{remark}\label{Genus8}
It is possible through a more involved argument to construct analogues of swap maps on a surface $\Sigma_4^1$, this time made of four copies of $\Sigma_2^1$ and a $2$-disk with four holes, exchanging different copies of $\Sigma_2^1$. An indirect construction of such a map can be seen in Section~7.4 of \cite{BEV}. This changes which framings can (or should) be associated to the new swap map for its interpretation via framed $4$-braids. It is then possible to obtain both Theorem~\ref{thm:minimal example} and Theorem~\ref{mainthm} for $g=8$ and higher, though the technical details are more complicated.
\end{remark}

\vspace{0.2in}

% ==========================================================================================================
\section{Applications to symplectic and contact topology} \label{Sec: Applications}
% ==========================================================================================================

We will now discuss various results on the topology of closed symplectic \linebreak $4$-manifolds and contact $3$-manifolds, all departing from Theorem~\ref{mainthm}. We begin by proving Theorems~\ref{cor1} and \ref{cor2} given in the introduction:

\begin{proof}[Proof of Theorem~\ref{cor1}]
Any relation of the form
\[ 
t_{\delta} = \text{A product of $m$ positive Dehn twists along non-separating curves}
\]
in the mapping class group $\Gamma_g^1$ gives rise to a relatively minimal genus $g$ Lefschetz fibration 
\[ (X(m),f(m))=(X_g(m),f_g(m)) \]
over the $2$-sphere with a section of self-intersection $-1$. By Theorem~\ref{mainthm} we have such a family of relations prescribing a family of relatively minimal Lefschetz fibrations $\{(X(m), f(m)) | m \in \N\}$. Since the Euler characteristics of a fixed genus $g$ Lefschetz fibration $(X(m),f(m))$ is given by
\[
\eu(X(m)) = 4-4g + m \, ,
\]
we see that the Euler characteristic of $X(m)$ is a strictly increasing in $m$. 

To prove the second claim we first take the mapping classes $\Phi_l \in \Gamma_g^2$ from Theorem~\ref{thm:minimal example}, where $g=11+4l$. It is easy to see that each one of these elements can be completed to boundary multitwists using more swap maps in the same exact way as in the proof of Theorem~\ref{mainthm}. 

As seen from the proof of Theorem~\ref{mainthm}, the monodromy factorization of each genus $11$ Lefschetz fibration $f(m)$ consists of a product of positive Dehn twists along $c_1, c_2, c_3$ supported in the subsurface $F_1$, various swap maps and their conjugates. As usual, we can calculate $H_1(X(m))$ by taking the quotient of $H_1(F)$, where $F$ is the regular (closed) fiber, by the normal group generated by the vanishing cycles. The key observation to make is that each swap map is an involution on the subsurface $F_{ij}$, whose action on the standard homology generators of $F_{ij}$ identifies the homology generators contained in the subsurface $F_i$ to that of $F_j$, while turning the two generators in the middle to $2$-torsion elements. Said differently, the vanishing cycles coming from a swap map $\rho_{ij}$ (See Figure~\ref{fig:rhoposfac2}) or its conjugates on $F_{ij}$ introduce relations in $H_1(F)$ which identify the homology generators supported in $F_i$ to those in $F_j$ while turning the two generators coming from the ``connecting genus'' in between the two into $2$-torsion elements. So we see that all infinite order homology generators of $F$ lie in $F_1$. On the other hand, the remaining vanishing cycles $c_1, c_2, c_3$ further kill three of these, while keeping the homology generator represented by $c_4$ in Figure~\ref{CommutatorRelationCurves} intact, thus allowing us to conclude that $b_1(X(m))=1$. It is now straightforward to generalize the same line of arguments to the genus $g=11+4l$ Lefschetz fibrations described above to conclude that for each $l$ we have $b_1=1+2l$. 

Blowing down one of the distinguished $-1$ sections of the above families of Lefschetz fibrations, we obtain the promised Lefschetz pencils in the theorem.
\end{proof}

\begin{proof}[Proof of Theorem~\ref{cor2}] 
Let $(Y_k), f_k)$ be the family of genus $g$ open books with two boundary components given by iterating $k$ times the monodromies we obtain in the previous section. Corresponding to the arbitrarily long positive factorization (parametrized by $m$) of each open book, there exists a Lefschetz fibration $(X_k(m), F_k(m))$ with regular fiber $F \cong \Sigma_g^2$ and base $D^2$. Since all the vanishing cycles are non-separating, each $(X_k(m), F_k(m))$ is an allowable Lefschetz fibration. Lastly, from the monodromy of the open book $f_k$ on $Y_k$ we easily see that $Y_k$ is the Seifert fibered $3$-manifold with base genus $g$ and two singular fibers of degree $k$, which implies that the $3$-manifolds $Y_k$, and therefore the family of contact \linebreak $3$-manifolds $\{ (Y_k, \xi_k) \, | k \geq 0 \}$ we obtain from the contact structures induced by the open books in hand are all distinct. Capping off one of the boundary components yields the promised families with connected binding.

Using the families of genus $g=11+4l$ Lefschetz fibrations we have in the proof of Theorem~\ref{cor1} above, we prove the second claim. The infinite families of contact \linebreak $3$-manifolds supported by genus $g=11+4l$ open books are the boundaries of allowable Lefschetz fibrations obtained by taking the $k$-times iterated (untwisted) fiber sums of these fibrations, from which we then remove a regular fiber and one of the distinguished sections of self-intersection $-k$. Thus the first Betti number of these bounding $4$-manifolds is the same as the first Betti number of one copy of them, that is, $1+2l$, which increases as we increase $l \in \N$. 
\end{proof}

\begin{remark}
It might be worth noting that taking twisted fibers sums, we can choose any non-negative integer to be the $b_1$ of all the Stein fillings in Theorem~\ref{cor2} for large enough $g$. It can be seen from the proof of Theorem~\ref{mainthm} that the chain of vanishing cycles we add result in symplectic $4$-manifolds with $b_1=0$ discussed in the \textit{first} part of Theorem~\ref{cor1}, once $g>11$. 
\end{remark}

\begin{remark}
All but finitely many members of any family of closed symplectic \linebreak $4$-manifolds or Stein fillings satisfying the statements of the above theorems can be seen to have $b^+>1$. This is immediate once we invoke the following theorem from \cite{Stipsicz2}: For any Stein filling $(X,J)$ of a fixed contact $3$-manifold $(Y, \xi)$, there is a lower bound for the sum $2 \eu(X) + 3 \sigma(X)$ which only depends on $(Y, \xi)$. That is $4- 4 b_1(X) + 5b^+(X) - b^-(X)$ is bounded from below, and since $|b_1(X)| \leq 2g-2$, large $\eu(X)$ implies large $b^+(X)$. (For closed symplectic $4$-manifolds, we take $(Y, \xi)$ to be the contact $3$-manifold induced by the boundary open book obtained after removing a regular fiber and a $(-1)$-section.)
\end{remark}

Thus, we obtain a new proof of our earlier result in \cite{BV}: 

\begin{corollary} \label{ALSFthm}
There are infinite families of contact $3$-manifolds, where each contact $3$-manifold admits a Stein filling with arbitrarily large Euler characteristic.
\end{corollary}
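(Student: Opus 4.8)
The plan is to obtain Corollary~\ref{ALSFthm} as an immediate consequence of Theorem~\ref{cor2} together with Theorem~\ref{PALF}. The statement of Theorem~\ref{cor2} already hands us, for each $g \geq 11$, infinitely many closed $3$-manifolds supported by genus $g$ open books with connected binding, each of which bounds allowable Lefschetz fibrations over $D^2$ with arbitrarily large Euler characteristics. What remains is to translate ``bounds an allowable Lefschetz fibration over $D^2$'' into ``admits a Stein filling'', and to confirm that these Stein fillings all have the same contact boundary while their Euler characteristics grow without bound.

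First I would fix, for each $g \geq 11$, one of the closed $3$-manifolds $Y$ produced by Theorem~\ref{cor2}, say $Y = Y_k$ for some fixed $k$, together with its supported contact structure $\xi$ coming from the genus $g$ open book. By Theorem~\ref{cor2}, there is a family of allowable Lefschetz fibrations $(X_k(m), F_k(m))$ over $D^2$, all with the same boundary open book (hence the same $\partial X_k(m) = Y_k$), whose Euler characteristics $\eu(X_k(m))$ are unbounded as $m \to \infty$ --- indeed, as in the proof of Theorem~\ref{cor1}, each added positive Dehn twist increases the Euler characteristic by one. Next I would invoke Theorem~\ref{PALF} (Loi--Piergallini, Akbulut--Ozbagci): each allowable Lefschetz fibration over $D^2$ carries a Stein structure, and since all the $(X_k(m), F_k(m))$ are bounded by the \emph{same} open book on $Y_k$, the second half of that theorem guarantees that the Stein structures all fill the \emph{same} contact structure on $Y_k$, which is precisely $\xi_k$. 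Thus $(Y_k, \xi_k)$ is a single contact $3$-manifold with Stein fillings $X_k(m)$ of arbitrarily large Euler characteristic, giving one member of the desired infinite family. Letting $g$ range over $\{11, 12, 13, \dots\}$ (or, if one prefers genuinely distinct underlying $3$-manifolds, letting $k$ also vary, using the Seifert-fibered description in the proof of Theorem~\ref{cor2} to see the $Y_k$ are pairwise non-diffeomorphic) produces infinitely many such contact $3$-manifolds, completing the argument.

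There is no real obstacle here: the corollary is genuinely a repackaging of Theorem~\ref{cor2} through the Stein/allowable-Lefschetz-fibration dictionary. The only point requiring a moment's care is to make sure the various $X_k(m)$ in a single family are bounded by literally the same open book (not merely open books related by stabilization), so that Theorem~\ref{PALF} applies verbatim to conclude they all fill one fixed contact structure --- but this is built into the construction in Section~\ref{Sec: Extending}, where the monodromy of the boundary open book ($M_\bdry$, or its higher-genus analogue, possibly iterated $k$ times) is held fixed while positive Dehn twists are inserted into the interior to lengthen the factorization. One could also remark, as the preceding text does, that all but finitely many of these Stein fillings have $b^+ > 1$ by the inequality of \cite{Stipsicz2}, but that is not needed for the corollary as stated.
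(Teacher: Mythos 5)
Your argument is correct and is essentially identical to the paper's own proof: both invoke Theorem~\ref{PALF} to endow the allowable Lefschetz fibrations $(X_k(m),f_k(m))$ from the proof of Theorem~\ref{cor2} with Stein structures filling the fixed contact structure $\xi_k$ on $Y_k$ (independent of $m$), and then vary $k$ to produce the infinite family. The extra care you take about the fillings being bounded by literally the same open book is a reasonable point to flag, and it is indeed guaranteed by the construction.
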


\begin{proof}
By Theorem~\ref{PALF} the total space of the allowable Lefschetz fibration \linebreak $(X_k(m),f_k(m))$ we used in the proof of Theorem~\ref{cor2} above admits a Stein structure inducing the same contact structure $\xi_k$ compatible with the open book on its boundary $Y_k$, independent of $m$. Varying $k$ we conclude the proof.
\end{proof}

\begin{remark}
This provides a new proof of the first part of our Theorem~1.1 in \cite{BV}, but for much restricted families: compare with Theorem~4.2 and the discussion at the end of Section~4.2 in \cite{BV}. However, we are unable to reproduce the second part of Theorem~1.1 which stated the existence of contact $3$-manifolds which admit Stein fillings not only with arbitrarily large Euler characteristics but also with arbitrarily small signatures. This is due to the significantly harder task of calculating the signatures of the filling from the given monodromies, when they are not seen to be hyperelliptic. One can in fact see that most of our monodromies are not hyperelliptic using the signature formula from \cite{Endo}: 
\begin{equation*}
\sigma(X)=-\frac{g+1}{2g+1}N+
\sum_{j=1}^{[\frac{g}{2}]}(\frac{4j(g-j)}{2g+1} -1)s_j .
\end{equation*}
Here $X$ is the total space of the hyperelliptic fibration (with closed fibers), $N$ and $s=\sum_{j=1}^{[\frac{g}{2}]}s_j$ are the numbers of nonseparating and separating vanishing cycles, respectively, whereas $s_j$ denotes the number of separating vanishing cycles which separate the surface into two subsurfaces of genera $j$ and $g-j$. Any one of our genus $g \geq 11$ Lefschetz fibration $(X,f)$ has $N= 10m + 104 + (2g+1)^{2g+2} - 23^{24}$ nonseparating vanishing cycles and no separating ones. We therefore see that we do not get an integer for most $g$ and $m$ allowing us to conclude that for these values, $(X,f)$ cannot be hyperelliptic. One can arrive at the same conclusion for our second family of genus $g=11+4l$ Lefschetz fibrations in the same way.
\end{remark}

Although we see that there are vast families of contact $3$-manifolds which appear as counter-examples to Stipsicz's Conjecture on the boundedness of the topology of Stein fillings \cite{Stipsicz2}, it is plausible that one can invoke Theorem~\ref{mainthm} to detect which ones can \emph{attain} this boundedness property. A simple invariant one can associate to any contact $3$-manifold can be described as follows: Any $\Phi \in \Gamma_g^1$ can be factorized by Dehn twists, which are \emph{all} positive except for the boundary parallel twists. This can be easily seen by looking at the homomorphism induced by capping off the boundary of $\Sigma_g^1$, whose kernel is generated by the boundary parallel Dehn twist  $t_\delta$. Let $\textbf{bt}(\Phi)$ denote the supremum of the powers of $t_\delta$ among these ``almost'' positive factorizations. Since any contact $3$-manifold can be supported by an open book of genus $g \geq 8$ with connected binding, we can now define the \textbf{boundary twisting} invariant $\textbf{bt}$ of a contact $3$-manifold $(Y, \xi)$ as the supremum of all $\textbf{bt}(\Phi)$ such that $\Phi$ is the monodromy of such an open book. It follows from \cite{Gi2} and \cite{LP} that
\[ \textbf{bt} : \{ \text{Stein fillable contact $3$-manifolds} \} \rightarrow \N \cup \{+ \infty \} . \]
On the other hand, by Theorem~\ref{mainthm} and the arguments applied in the proof of Corollary~\ref{ALSFthm} above, we conclude that
 
\begin{corollary} \label{invariant}
If there is an upper bound on the Euler characteristics of Stein fillings of a contact $3$-manifold $(Y, \xi)$, then  $\emph{\textbf{bt}}(Y, \xi)=0$. 
\end{corollary}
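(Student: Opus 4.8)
The plan is to argue by contraposition: I will show that if $\textbf{bt}(Y,\xi) \geq 1$, then $(Y,\xi)$ admits Stein fillings of arbitrarily large Euler characteristic, so boundedness forces $\textbf{bt}(Y,\xi)=0$. Suppose $\textbf{bt}(Y,\xi) \geq 1$. Then there is an open book with connected binding and page $\Sigma_h^1$ supporting $(Y,\xi)$, whose monodromy $\Phi$ admits an ``almost positive'' factorization involving at least one positive power of the boundary parallel Dehn twist $t_\delta$; that is, in $\Gamma_h^1$ we have a relation $\Phi = t_\delta^{\, k}\prod_j t_{v_j}$ with $k \geq 1$ and all $v_j$ non-separating. (If $\Phi$ only admits such factorizations with $t_\delta$ appearing to negative or zero powers, we may still assume $k\ge 1$ after possibly passing to the open book of larger genus and using that $\textbf{bt}$ is a supremum over all supporting open books; the point is simply that the hypothesis $\textbf{bt}\ge 1$ delivers a genuine positive occurrence of $t_\delta$.)

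First I would stabilize so that the page has genus $g \geq 11$ (positive stabilizations add non-separating vanishing cycles and change neither the supported contact structure nor, by Theorem~\ref{PALF}, the relevant Stein fillability), keeping a positive power of $t_\delta$ in the factorization. Next, the key step: I would invoke Theorem~\ref{mainthm}, which gives, on a genus $g$ surface with boundary, a factorization of a power of $t_\delta$ (equivalently $M_\bdry$ in $\Gamma_g^2$, pushed to $\Gamma_g^1$ via the capping homomorphism) as arbitrarily long products of positive Dehn twists along non-separating curves. Substituting these ever longer positive factorizations for the single occurrence $t_\delta^{\,k}$ (using $t_\delta^{\,k} = (t_\delta)^k$ and applying Theorem~\ref{mainthm} to each factor, or directly to $t_\delta^{\,k}$ since positive powers of $t_\delta$ also enjoy arbitrarily long positive factorizations as noted after Theorem~\ref{mainthm}) yields genuine positive factorizations of $\Phi$ in $\Gamma_g^1$ of unbounded length. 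Each such length-$N$ positive factorization of $\Phi$ determines a genus $g$ allowable Lefschetz fibration over $D^2$ with $N$ vanishing cycles, all non-separating, bounding the fixed open book $(Y,\Phi)$; its total space $X_N$ is a Stein filling of $(Y,\xi)$ by Theorem~\ref{PALF}, with $\eu(X_N) = 1 - 2g + N \to \infty$. This contradicts the assumed upper bound, completing the contrapositive and hence the corollary.

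The main obstacle I anticipate is bookkeeping at the interface between $\Gamma_g^1$ and $\Gamma_g^2$ and the precise meaning of ``inserting'' the long positive word for $t_\delta^{\,k}$: one must be sure that replacing the subword $t_\delta^{\,k}$ in a given factorization of $\Phi$ by a long positive word still yields a valid factorization of the \emph{same} $\Phi$, and that the resulting Lefschetz fibration is still allowable (no separating vanishing cycles, non-empty boundary) and still bounded by $(Y,\xi)$. Here Theorem~\ref{mainthm} gives exactly an equality $t_\delta^{\,k} = W$ with $W$ a positive word in non-separating twists, so the substitution is literally valid in $\Gamma_g^1$; the allowability and the boundary open book are unchanged because we have only altered the interior monodromy factorization while keeping the product fixed. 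Once this is in place, the Euler characteristic count and the appeal to Theorem~\ref{PALF} are routine, and the corollary follows. (Everything here is a repackaging of the argument already given for Corollary~\ref{ALSFthm}, now applied to the specific open book realizing $\textbf{bt}(Y,\xi)\ge 1$ rather than to the explicit families constructed in Section~\ref{Sec: Construction}.)
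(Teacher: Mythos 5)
Your overall strategy --- argue by contraposition, extract from $\textbf{bt}(Y,\xi)\geq 1$ a supporting open book whose monodromy has an almost-positive factorization $\Phi=t_\delta^{\,k}P$ with $k\geq 1$, substitute the arbitrarily long positive factorizations of $t_\delta$ supplied by Theorem~\ref{mainthm}, and feed the resulting unboundedly long positive factorizations into Theorem~\ref{PALF} to obtain Stein fillings of $(Y,\xi)$ with $\eu \to \infty$ --- is exactly the argument the paper intends; its own ``proof'' is the single sentence invoking Theorem~\ref{mainthm} together with the proof of Corollary~\ref{ALSFthm}.

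The one step that fails as written is the stabilization. If the open book realizing $\textbf{bt}\geq 1$ has page $\Sigma_h^1$ with $8\leq h\leq 10$ and you positively stabilize up to genus $g\geq 11$, the curve $\delta$ that was boundary-parallel in $\Sigma_h^1$ becomes a separating but \emph{not} boundary-parallel curve in the new page; the stabilized monodromy's factorization therefore contains $t_{\delta_{\mathrm{old}}}^{\,k}$ rather than a positive power of the new boundary twist, and Theorem~\ref{mainthm} --- which factorizes the boundary twist of the \emph{whole} page --- cannot be substituted for it. In other words, the substitution must be carried out inside $\Gamma_h^1$ for the original page: when $h\geq 11$ this works directly and no stabilization is needed, while for $h\in\{8,9,10\}$ one has no choice but to invoke the $g\geq 8$ improvement of Theorem~\ref{mainthm} sketched in Remark~\ref{Genus8}, which is precisely why the paper defines $\textbf{bt}$ using open books of genus at least $8$. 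With the substitution performed in the correct group, the remainder of your argument (allowability, the count $\eu(X_N)=1-2g+N$, and the appeal to Theorem~\ref{PALF}) is correct and coincides with the paper's.
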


\vspace{0.2in}
%\newpage
% ===========================================================================================================
\section{Final remarks} \label{Sec: Final} \
% ===========================================================================================================
We finish with a few remarks and questions.

\noindent \textbf{Further constructions.} Our construction of the monodromies in the proofs of Theorems~~\ref{mainthm} and \ref{thm:minimal example} clings on expressing a single commutator on a genus $g \geq 2$ surface with boundary as arbitrarily long products of positive Dehn twists and no boundary parallel twists. As demonstrated in \cite{BV}, there are many other relations of this sort expressing a fixed number of commutators, say $h$ many, as arbitrarily long products of positive Dehn twists and $k$ boundary parallel Dehn twists for any $h \geq 1$ and $0 \leq k \leq 2h-2$. Using these relations and a larger variety of swap maps (i.e. exchanging more than four subsurfaces) instead, one can construct a larger collection of factorizations.

\noindent \textbf{Arbitrarily long positive factorizations in $\Gamma_g^n$.} Our method of constructing arbitrarily long positive factorizations requires at least four subsurfaces which are translated to each other in a commutator fashion and a relation supported in one of these subsurfaces expressing a commutator as an arbitrarily long product of positive Dehn twists. As discussed above, the latter requires the genus of each one of these subsurfaces to be at least two. Although it is possible to avoid the extra genera incorporated into our constructions as sketched in Remark~\ref{Genus8} above, we see that our current methods do not allow us to derive similar factorizations on a surface with genus less than eight. Finding other ways to produce arbitrarily long positive factorizations of mapping classes on surfaces of genus $g<8$ would be interesting. In general, it would be good to determine exactly for which pairs of integers $g$ and $n \geq 1$ is there an element in $\Gamma_g^n$ with arbitrarily long positive factorizations. 

\noindent \textbf{Lefschetz fibrations over surfaces of positive genera.} Generalizing Smith's question, one can ask for which fixed pairs of non-negative integers $g,h$ there exists an a priori upper bound on the Euler characteristic of a relatively minimal genus $g$ Lefschetz fibration over a genus $h$ surface \textit{admitting a maximal section}, i.e. a section of maximal possible self-intersection $-1$ if $h=0$ and of $2h-2$ if $h \geq 1$. When $h \geq 1$, the answer is completely determined by Korkmaz, Monden, and the first author, who proved that there is a bound if and only if $g=1$ \cite{BKM}. Combined with the results of this paper, this question now remains open only for the pairs $3 \leq g \leq 7$ and $h=0$.

\noindent \textbf{Stein fillings with bounded topology.} As per our discussion that led to Corollary~\ref{invariant} above, a curious question is the following: Does the vanishing of the boundary twisting invariant $\textbf{bt}$ (or any variant of it defined for smaller genus open books as well) determine precisely which Stein fillable contact $3$-manifolds have an a priori upper bound on the Euler characteristic of its Stein fillings? (As seen from Theorem~\ref{thm:minimal example}, there are mapping classes other than boundary multitwists which also admit arbitrarily long positive factorizations, suggesting that an affirmative answer to this question may not be very likely.)

\noindent \textbf{Underlying geometries.} In \cite{BV} we built arbitrarily large Stein fillings of contact structures on $3$-manifolds which were graph but not Seifert fibered, leading to the natural question on whether this aspect was related to the underlying geometry of the $3$-manifold. Our current work provides a negative answer: we see that there are Seifert fibered $3$-manifolds with one or two singular fibers, which can be equipped with contact structures admitting arbitrarily large Stein fillings. The property of admitting arbitrarily large Stein fillings therefore seems to be more connected to the underlying topology (such as how small the base can be for a Seifert fibered $3$-manifold) rather than the underlying geometry in general.
\vspace{0.3in}

\end{document}